\documentclass{amsart}

\usepackage{geometry} 
\usepackage{amssymb}
\usepackage{relsize}
\usepackage[all]{xy}
\usepackage{xcolor}
\usepackage{dsfont}
\usepackage[hidelinks]{hyperref}
\usepackage{cleveref}

\numberwithin{equation}{section}

\newtheorem{theorem}[subsubsection]{Theorem}
\newtheorem{proposition}[subsubsection]{Proposition}

\newtheorem{corollary}[subsubsection]{Corollary}
\newtheorem{lemma}[subsubsection]{Lemma}

\newtheorem*{theorem*}{Theorem}

\theoremstyle{definition}
\newtheorem{definition}[subsubsection]{Definition}

\theoremstyle{remark}
\newtheorem{example}[subsubsection]{Example}
\newtheorem{remark}[subsubsection]{Remark}

\title[]{On geometrically reductive tensor categories}
\author{Kevin Coulembier}
\address{School of Mathematics and Statistics, University of Sydney, Australia}
\email{kevin.coulembier@sydney.edu.au}

\newcommand{\Sym}{\operatorname{S}}
\newcommand{\w}{\mathlarger{\wedge}}
\renewcommand{\k}{\Bbbk}
\newcommand{\bk}{\Bbbk}

\newcommand{\unit}{\mathds{1}}
\renewcommand{\Vec}{\mathtt{Vec}}
\newcommand{\sVec}{\mathtt{sVec}}
\newcommand{\Ver}{\mathtt{Ver}}
\newcommand{\Rep}{\mathtt{Rep}}
\newcommand{\Ind}{\mathtt{Ind}}
\newcommand{\VEC}{\mathtt{VEC}}
\newcommand{\Tilt}{\mathtt{Tilt}}

\newcommand{\GL}{\operatorname{GL}}
\newcommand{\Indu}{\operatorname{Ind}}
\newcommand{\Res}{\operatorname{Res}}
\newcommand{\SL}{\operatorname{SL}}
\newcommand{\ch}{\operatorname{ch}}
\newcommand{\CAlg}{\operatorname{CAlg}}
\newcommand{\Dim}{\mathrm{Dim}}

\newcommand{\inv}{\mathrm{inv}}

\newcommand{\mZ}{\mathbb{Z}}
\newcommand{\mN}{\mathbb{N}}
\newcommand{\mR}{\mathbb{R}}
\newcommand{\mG}{\mathbb{G}}

\newcommand{\bI}{\mathbf{I}}

\newcommand{\cC}{\mathcal{C}}
\newcommand{\cD}{\mathcal{D}}
\newcommand{\cT}{\mathcal{T}}
\newcommand{\cTb}{\overline{\cT}}

\newcommand{\ev}{\mathrm{ev}}

\newcommand{\PS}{\operatorname{PS}}
\newcommand{\PE}{\operatorname{PE}}
\newcommand{\Hom}{\operatorname{Hom}}
\newcommand{\End}{\operatorname{End}}
\newcommand{\mV}{\mathbb{V}}

\newcommand{\tto}{\twoheadrightarrow}
\begin{document}

\begin{abstract}
We prove the conjecture that higher Verlinde categories are geometrically reductive. This is one of the two properties required in order for recent results on algebraic geometry in tensor categories to apply to these categories. We also reduce two further conjectures concerning geometric reductivity for tensor categories to other conjectures appearing in the literature.
\end{abstract}

\keywords{symmetric tensor categories, geometric reductivity, higher Verlinde categories, algebras of invariants}

\subjclass[2020]{18M05, 20G05}

\maketitle

\section*{Introduction}

Symmetric tensor categories of moderate growth over fields of characteristic zero are well-understood due to classical work of Deligne, see \cite{Del90, Del02}. Throughout, we let $\Bbbk$ be an algebraically closed field, of prime characteristic $p>0$.
The current paper is part of the ongoing exploration of symmetric tensor categories of moderate growth over $\bk$. By \cite{CEO2, Del90}, every such tensor category is a representation category of an affine group scheme internal to an `incompressible' tensor category, which in a sense reduces the study to the latter type of category. There is a concrete conjecture for what these incompressible categories might be, namely the tensor subcategories of a category $\Ver_{p^\infty}$ introduced in \cite{BE, BEO, AbEnv}, and studied further and generalised in \cite{Dec25, Ne}. Evidence for this conjecture can be found in \cite{CEO, EG} and references therein. 

To exploit truly the realisation of a tensor category as a representation category in an incompressible category, results in \cite{CAlg, Geom, CEO2, CS, CS2} (concerned with algebraic geometry, existence of homogeneous spaces and characterisations of simple algebras) demonstrate that the incompressible categories should satisfy two fundamental properties introduced in \cite[\S 3]{CAlg}, they should be `maximally nilpotent' (MN) and `geometrically reductive' (GR). The former means that symmetric algebras of objects are as small as possible, while the latter is a direct generalisation of the property with same name for affine group schemes. It is known by \cite{CEO2} that $\Ver_{2^\infty}$ satisfies these conditions and the same is conjectured of $\Ver_{p^\infty}$ for all $p$ in \cite{CAlg}. That $\Ver_{p^2}\subset \Ver_{p^\infty}$ is GR was proved in \cite[Proposition~3.1.8]{CAlg}, with a method that unfortunately already fails for $\Ver_{p^3}$.

The two conditions MN and GR have rather different roles. The MN condition is extremely restrictive, in fact expected to completely characterise incompressible categories. The GR condition is expected to hold much more generally, although proving this is not always easy. For instance, the classical conjecture of Mumford \cite{Mu} that reductive groups are geometrically reductive was proved by Haboush in \cite{Ha}.

In conclusion, the main aims of the line of research described above are showing that $\Ver_{p^\infty}$ contains all incompressible tensor categories and is MN and GR. The current paper proves this last condition, confirming \cite[Conjecture~3.1.6(2)]{CAlg}:

\medskip

{\bf Theorem A.} {\em For any $p>0$, the tensor category $\Ver_{p^\infty}$ is geometrically reductive.}

\medskip

For $p=2$, Theorem~A was proved as \cite[Theorem~9.3.7]{CEO} using different methods, relying on the alternative construction of $\Ver_{2^\infty}$ in \cite{BE}. We therefore focus on a proof for $p>2$ specifically, which allows for some simplifications, culminating in Theorem~\ref{thm:GR}. 
The approach to the proof is as follows. We write an important instance of the geometric reductivity of $\SL_2$ as a commutative diagram in $\Tilt \SL_2$, and subsequently demonstrate that the image of this diagram under the defining functor $\Tilt \SL_2\to\Ver_{p^n}$ implies the target is also GR.
In doing the latter we actually obtain some further results towards proving that $\Ver_{p^\infty}$ is also MN, see Remark~\ref{rem:MN}.

As mentioned above, the GR condition is not expected to be very exclusive. We rather view Theorem~A as a step towards proving that all finite tensor categories are GR. In fact, we show that this is the case, provided some of the conjectures mentioned above are valid:

\medskip

{\bf Theorem B.} {\em Assume that $\Ver_{p^\infty}$ is maximally nilpotent, then any finite tensor category that admits a tensor functor to $\Ver_{p^\infty}$ is geometrically reductive.}

\medskip

Theorem~B is proved in Theorem~\ref{thm:comb}. The same theorem also proves that the analogue of Nagata's theorem \cite{Na} (that an affine group scheme $G$ is GR if and only if the invariant algebras of finitely generated algebras with $G$-actions are finitely generated) for tensor categories follows from the above conjectures.

The paper is organised as follows. In Section~\ref{sec:prel} we review some background. In Section~\ref{sec:repth} we derive some results on modular representation theory that are needed for the sequel. In Section~\ref{sec:main} we prove Theorem~A and in Section~\ref{sec:inv} we prove Theorem~B.

\section{preliminaries}\label{sec:prel}

Denote by $S_n$ the permutation group of the set $\{1,2,\ldots,n\}$, and by $|\sigma|$ the parity of a permutation $\sigma\in S_n$. We set $\mN=\mZ_{\ge 0}$ and 
let $\bk$ be an algebraically closed field.

\subsection{Tensor categories}

\subsubsection{} We use the notion of `tensor categories' as in \cite{EGNO}, except that, since the topic is fundamentally restricted to \emph{symmetric} tensor categories, we leave out any reference to `symmetric'. Concretely, an essentially small $\bk$-linear symmetric category
$(\cC,\otimes,\unit)$ is a {\bf tensor category over $\bk$} if $\cC$ is abelian with the length $\ell(X)$ of each object $X\in\cC$ finite, $\bk\to\End_{\cC}(\unit)$ is an isomorphism and the monoidal category $(\cC,\otimes,\unit)$ is rigid. The latter means that every $X\in\cC$ has a monoidal dual $X^\ast$, with appropriate evaluation morphism $\ev_X:X^\ast\otimes X\to \unit$ and coevaluation. An exact symmetric monoidal functor between tensor categories is called a {\bf tensor functor}.

A tensor category is {\bf of moderate growth} if for each $X\in \cC$ there is $C\in\mR$ with $\ell(X^{\otimes n})\le C^n$. Examples of such categories are given by {\bf finite tensor categories} $\cC$ in the sense of \cite{EGNO}, meaning $\cC$ has enough projective objects and only finitely many simple objects up to isomorphism. A tensor category $\cC$ is {\bf finitely generated} if there exists an $X\in\cC$ such that every object in $\cC$ is a subquotient of a polynomial in $X$ and $X^\ast$.

The smallest tensor category over $\bk$ is the category $\Vec$ of finite dimensional vector spaces. We denote the symmetric monoidal category of all vector spaces by $\VEC$. The tensor category $\sVec$ of
supervector spaces is the category of $\mZ/2$-graded vector spaces with braiding such that the braiding isomorphism on $\bar{\unit}\otimes\bar{\unit}$ for the non-trivial simple object $\bar{\unit}$ is $-1.$ For an affine group scheme $G$ over $\bk$, the category of finite dimensional rational representations $\Rep_{\bk}G$ is canonically a tensor category.

For any ind-object $X$ in a tensor category $\cC$, we consider the `invariant' subobject $X^{\inv}=\Hom(\unit, X)$. We can regard $-^{\inv}$ as the right adjoint of the inclusion tensor functor $\Vec\to\cC$.

We denote by $\CAlg\cC$ the category of commutative ind-algebras in $\cC$ (monoid objects in $\Ind\cC$). For $A\in \CAlg$, we have that $A^{\inv}\subset A$ is its maximal subalgebra belonging to $\VEC\subset\Ind\cC$. An ind-algebra is finitely generated if it is a quotient of an algebra $\Sym X$, as defined next, for $X\in\cC$.

\subsubsection{Some presentations}

For $X\in\cC$ consider the braiding morphism $s=s_{X,X}:X^{\otimes 2}\to X^{\otimes 2}$.  
Assume $\mathrm{char}(\k)\not=2$, then we have projection operators $(1\pm s)/2$, leading to the decomposition $X^{\otimes 2}=\Sym^2X\oplus\w^2X$ into the symmetric and exterior power. More generally, we let $\Sym^jX$ be the maximal $S_j$-invariant (for the braiding action) quotient of $X^{\otimes j}$, so that $\Sym X:=\oplus_j\Sym^jX$ is the commutative algebra freely generated by $X$. This means we have a defining exact sequence
\begin{equation}\label{eq:PS}
\PS^j(X)\to X^{\otimes j}\to \Sym^jX\to0,
\end{equation}
where
\[\PS^j(X)=\bigoplus_{i=0}^{j-2}\PS^j_{i}(X),\quad\mbox{where}\quad \PS_i^j(X)= X^{\otimes i}\otimes\w^2 X\otimes X^{\otimes(j-i-2)}, \]
and the left map in \eqref{eq:PS} simply consists of inclusions of direct summands.

Similarly, we let $\w^jX$ be the maximal $S_j$-invariant, for the sign twisted braiding action, quotient of $X^{\otimes j}$. This means we have a defining exact sequence
\begin{equation}\label{eq:PE}
\PE^j(X)\to X^{\otimes j}\to \w^jX\to0,
\end{equation}
where
\[\PE^j(X)=\bigoplus_{i=0}^{j-2}\PE^j_{i}(X),\quad\mbox{where}\quad \PE_i^j(X)= X^{\otimes i}\otimes\Sym^2 X\otimes X^{\otimes(j-i-2)}, \]
and the left map in \eqref{eq:PE} again consists of inclusions of direct summands.
We will also use the presentation
\begin{equation}
\label{eq:PE2}
\Sym^2X\otimes \w^{j-2}X \to  X\otimes \w^{j-1}X\to\w^jX\to 0,
\end{equation}
where the left map is the composition of the obvious inclusion into $X^{\otimes 2}\otimes \w^{j-2}X$ followed by the obvious projection. This sequence is exact, for instance by observing that there is a map from the sequence in \eqref{eq:PE}, which is the identity on the right term $\w^j X$, and an epimorphism for the other two terms.

\subsubsection{Geometric reductivity}
Following \cite{CAlg}, a tensor category $\cC$ is {\bf geometrically reductive} (GR) if for every non-zero morphism $X\to\unit$, there is some $m\in\mZ_{>0}$ for which the induced epimorphism $\Sym^m X\tto \unit$ is split. For an affine group scheme $G$ over $\bk$, the tensor category $\Rep_{\bk}G$ is GR if and only if $G$ is geometrically reductive in the classical sense.
We recall the following connection with `invariant theory'.
\begin{lemma}\cite[Theorem~3.1.5(1)]{CAlg}\label{lem:315} If for every finitely generated $A\in\CAlg\cC$, the invariant algebra $A^{\inv}$ is finitely generated, then $\cC$ is GR.
\end{lemma}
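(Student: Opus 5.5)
Given a non-zero morphism $f:X\to\unit$, necessarily an epimorphism since $\unit$ is simple, the plan is to mimic the classical argument of Nagata. We build a finitely generated commutative algebra from $f$, apply the hypothesis to its invariants, and read off a splitting of $\Sym^m X\tto\unit$ for some $m>0$.

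\textbf{Step 1 (the algebra).} Consider $\Sym X$, which is finitely generated. Let $I\subset \Sym X$ be the ideal generated by the image of $\ker f\subset X=\Sym^1X$, and set $A=\Sym X/I$. Then $A\cong\bigoplus_j A_j$ is graded, and $f$ induces a surjection $\Sym^jX\tto\unit$ for each $j$; this surjection is the $j$-th power of $f$. Taking $B=\Sym X$ itself would not help, so I would instead use the Rees-type algebra $R=\bigoplus_{j}\Sym^jX\, t^j$ in $\CAlg\cC$. A finer variant, as in the classical proof, is to take $A=\Sym(X)$ with its grading and to compare $A^{\inv}$ with the invariants of the quotient $A/(\ker f)$. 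The key construction is the following. Let $x\in \Hom(\unit,X^\ast)$ correspond to $f$. This gives an algebra morphism $\phi:\Sym X\to \Sym\unit=\bk[t]$ with $\phi(X)=\bk t$ via $f$. The condition that $\Sym^mX\tto\unit$ splits is exactly that the degree-$m$ part of $\phi^{\inv}:(\Sym X)^{\inv}\to\bk[t]$ is nonzero, since a split epimorphism onto $\unit$ is the same as an invariant $\unit\to\Sym^mX$ composing non-trivially to $\unit$.

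\textbf{Step 2 (contradiction via finite generation).} Suppose no $m>0$ works, so that $\phi((\Sym X)^{\inv})=\bk$. Here one must produce an algebra whose invariants are not finitely generated. Following Nagata, I would take $A=\Sym(X)\otimes\bk[s]$ modulo an ideal, or more directly the subalgebra $A=\bigoplus_j \ker(\Sym^jX\to\unit)\oplus\bk$. Concretely, set $K_j=\ker(f^j)\subset\Sym^jX$ and $D=\bk\oplus\bigoplus_{j>0}K_j$, an ideal-plus-unit subalgebra of $\Sym X$. I expect the main obstacle to be showing that $D$ is finitely generated. Indeed $K=\ker(\phi)$ is the ideal generated by $\ker f$ together with elements $x\otimes y-f(x)y$..., so the cleaner choice is the algebra $\Sym X/(\ker f)^2$-type construction used in \cite{CAlg}. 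Instead, I would use $A=\Sym(X)[u]/(\text{relation identifying }\phi)$. More precisely, I would take $A=\Sym X\otimes \Sym X'$ with $X'=\ker f$, and exploit the exact sequence $0\to X'\to X\to\unit\to 0$ together with the hypothesis to derive that $A^{\inv}\to\bk[t]$ has image $\bk[t^m]$ for some $m$. The expected mechanism is the following. The finitely generated algebra $\Sym X/(X')$ has invariants surjecting onto a subalgebra of $\bk[t]$. If the invariants of the Rees algebra $\bigoplus_j (X'\Sym^{j-1}X) t^{-j}\subset \Sym X[t^{-1}]$ are finitely generated, then a bounded-degree generating set forces an invariant of positive degree mapping to a non-zero power of $t$.

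The crux, and the step I expect to be hardest, is choosing this finitely generated algebra so that its invariants being finitely generated forces non-triviality of $\phi^{\inv}$ in some positive degree. I would try the Rees algebra over the filtration of $\Sym X$ by powers of the ideal generated by $X'$, which is finitely generated by $X$ and $X'$. An infinite ascending chain of invariant subalgebras would then arise if all positive-degree invariants lay in $(X')$, contradicting finite generation.
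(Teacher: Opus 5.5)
The paper gives no proof of this lemma; it is quoted from \cite[Theorem~3.1.5(1)]{CAlg}. Judged on its own, your proposal has a genuine gap.

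Your reduction is correct. A splitting of $\Sym^mX\tto\unit$ is the same as an invariant in $\Sym^mX$ with non-zero image under $\phi$. So if GR fails for $f$, then $\phi((\Sym X)^{\inv})=\bk$.

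What is missing is the crux you flag yourself: a finitely generated algebra whose invariants this condition forces to be infinitely generated. None of your candidates works.
\begin{itemize}
\item The algebra $D=\bk\oplus\bigoplus_{j>0}K_j=\bk+J$, with $J=\ker\phi=(X')$, is in general not finitely generated, already in $\Vec$. The same holds for your ``Rees algebra'' $\bigoplus_j(X'\Sym^{j-1}X)t^{-j}$, which is just $D$ regraded.
\item The Rees algebra $\bigoplus_n J^nu^n$, the algebra $\Sym X\otimes\Sym X'$ and $\Sym X/J^2$ are finitely generated. However, their invariants can be finitely generated even when GR fails.
\end{itemize}
For a concrete example, take $\cC=\Rep\mG_a$ and $\Sym X=\bk[x_1,x_2]$, with $x_2$ invariant and $x_1\mapsto x_1+sx_2$. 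Let $f(x_1)=1$ and $f(x_2)=0$. Then $(\Sym X)^{\inv}=\bk[x_2]$, so $f$ violates GR. Yet the three algebras have finitely generated invariants:
\begin{itemize}
\item the Rees algebra gives $\bk[x_2,x_2u]$;
\item $\Sym X\otimes\Sym X'$ gives $\bk[x_2,y]$;
\item $\Sym X/J^2$ gives $\bk[x_1^p]\oplus\bk[x_1]x_2$, generated by $x_1^p,x_2,x_1x_2,\ldots,x_1^{p-1}x_2$.
\end{itemize}
Your closing inference is also not valid as stated. Finitely generated algebras can contain non-stabilising ascending chains of subalgebras. What finite generation of a graded algebra actually gives is generation in bounded degrees, and that only helps once the right algebra has been chosen.

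The missing idea is a square-zero extension by the module $\Sym X/J\cong\bk[t]$, on which all the non-trivial structure has been killed. Set $A=\Sym X\oplus\bk[t]\epsilon$, where:
\begin{itemize}
\item $\bk[t]\epsilon=\bigoplus_j\unit\,t^j\epsilon$ is made a $\Sym X$-module through $\phi$;
\item $(\bk[t]\epsilon)^2=0$.
\end{itemize}
This is an object of $\CAlg\cC$. It is a quotient of $\Sym(X\oplus\unit)$ (send the extra generator to $\epsilon$), hence finitely generated.

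Its invariants are $(\Sym X)^{\inv}\oplus\bk[t]\epsilon$. An invariant $a\in(\Sym X)^{\inv}$ acts on $\bk[t]\epsilon$ by $\phi(a)$, which is a scalar precisely because GR fails for $f$. Since $(a,m)(a',m')=(aa',\phi(a)m'+\phi(a')m)$, consider the subalgebra generated by finitely many invariants $(a_i,g_i\epsilon)$. Its $\epsilon$-components stay in the finite-dimensional space $\mathrm{span}_{\bk}\{g_i\epsilon\}$, whereas $\bk[t]\epsilon$ is infinite-dimensional. So $A^{\inv}$ is not finitely generated, contradicting the hypothesis. In the $\mG_a$ example this gives $A^{\inv}=\bk[x_2]\oplus\bk[t]\epsilon$ with $x_2\epsilon=0$, visibly not finitely generated.
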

For $\cC=\Rep_{\bk}G$ this is actually `if and only if', see \cite{Na}, and we prove a more general result for tensor categories in that direction in the current paper, see Theorem~\ref{thm:comb}(2).

\subsection{Higher Verlinde categories}
Let $\bk$ be of characteristic $p>0$.
\subsubsection{Representations of $\SL_2$}

Consider the algebraic group $\SL_2$ in its standard realisation. We label the simple modules, by their highest weight, with respect to the Borel subgroup of upper-triangular matrices as $L_i,i\in\mN$, and similarly for the indecomposable tilting modules $T_i$, see \cite{Jantzen}. A special role is played by the Steinberg modules $St_i:=L_{p^i-1}=T_{p^i-1}$. The non-zero tensor ideals of the monoidal category of tilting modules form one chain
$$\Tilt \SL_2\;\supset\; \bI_1\;\supset\; \bI_2\;\supset\; \bI_3\;\supset\;\cdots,$$
see \cite{Selecta} and are characterised by $St_i\in \bI_i$ and $St_{i-1}\not\in\bI_{i}$.

\subsubsection{}The higher Verlinde category $\Ver_{p^n}$, for $n\in\mZ_{>0}$, is the symmetric tensor category constructed in \cite{BEO,AbEnv} as the abelian envelope of $\Tilt SL_2/\bI_n$. It thus has a defining symmetric monoidal functor from $\Tilt SL_2$. Moreover, $\Ver_{p^n}$ is a finite tensor category and the objects in~$\bI_{n-1}$ are sent to the projective objects. Following \cite{Ne}, we can extend this functor to a symmetric monoidal functor
\[F_n:\cTb_n\to\Ver_{p^n}\]
where $\cTb_n\supset \Tilt SL_2$ is the full monoidal subcategory of $\Rep \SL_2$ of modules $W$ for which $W\otimes St_{n-1}\in\Tilt SL_2$. An advantage over the traditional defining functor is that every simple object in $\Ver_{p^n}$ is in the image of this functor. Indeed, by \cite[Theorem~5.8]{Ne} the simple objects in $\Ver_{p^n}$ are given by $V_i:=F_n(L_i)$, $0\le i\le p^{n-1}(p-1)-1$. In particular, we set \[\mV:=V_{p^{n-1}-1}=F_n(St_{n-1})\;\in\Ver_{p^n},\]
which is thus a simple projective object.

We have inclusions of tensor subcategories $\Ver_{p^{n-1}}\subset\Ver_{p^n}$, see \cite[Theorem~1.3]{BEO}, sending $V_i$ to $V_{pi}$, and it will be convenient to refer to the union $\Ver_{p^\infty}:=\cup_n\Ver_{p^n}$.

\begin{remark}
One can verify directly that the above labelling of simple objects in $\Ver_{p^n}$ by $0\le i\le p^{n-1}(p-1)-1$ agrees with the labelling in \cite{BEO} by the same set, for instance by using that the top of the tilting module $T_{ap^r-1+b}$, for $1\le a\le p-1$ and $0\le b\le p^{r}-1$ is given by $L_{ap^r-1-b}$ and \cite[Theorem~4.42]{BEO}.
\end{remark}

\begin{lemma}\label{lem:splitexact}
If an exact (when considered in $\Rep\SL_2$) sequence in $\cTb_n$
\[\Sigma:\;X_2\to X_1\to X_0\to 0\]
is split after applying $St_{n-1}\otimes-$, then the sequence $F_n(\Sigma)$ is exact in $\Ver_{p^n}$.
\end{lemma}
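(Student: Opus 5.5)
The plan is to use the fact that $\mV=F_n(St_{n-1})$ is a simple projective object, so that tensoring with it is faithful and exact, and so detects exactness. Set $Y_i:=St_{n-1}\otimes X_i$. By the definition of $\cTb_n$ each $Y_i$ is a tilting module, and by hypothesis the sequence $St_{n-1}\otimes\Sigma:\; Y_2\to Y_1\to Y_0\to 0$ is split exact. In $\Rep\SL_2$ this means it is isomorphic to a direct sum of sequences of the forms
\[0\to 0\to Z\xrightarrow{=} Z\to 0,\qquad Z\xrightarrow{=}Z\to 0\to 0,\qquad Z\to 0\to 0\to 0 .\]
A split exact sequence is determined by identities and by the two relations "composition is zero" and "some splitting maps compose to identities". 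Equivalently, it is a diagram in the additive category $\Tilt\SL_2$ together with splitting maps $s_0:Y_0\to Y_1$ and $s_1:Y_1\to Y_2$ satisfying equational identities. Any additive functor preserves such equational identities, so applying the monoidal functor $F_n$ (which restricts to the defining functor on $\Tilt\SL_2$) gives a split exact sequence
\[\mV\otimes F_n(X_2)\to \mV\otimes F_n(X_1)\to \mV\otimes F_n(X_0)\to 0 .\]
Here I use the monoidal structure $F_n(St_{n-1}\otimes X_i)\cong \mV\otimes F_n(X_i)$, naturally in $X_i$, so the maps are $\mV\otimes F_n(d_i)$.

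The original sequence is a complex because $d_1d_2=0$ already in $\cTb_n$. Hence $F_n(\Sigma)$ is a complex, and it remains to show its homology vanishes. Since $\Ver_{p^n}$ is a tensor category, $\mV\otimes-$ is exact and therefore commutes with homology. So $\mV\otimes H(F_n(\Sigma))=0$ at each position. The final step is that $\mV\otimes H=0$ forces $H=0$ for any object $H$ of a tensor category, since $\mV\neq0$. Indeed, $\mV^\ast\otimes\mV\to\unit$ (the evaluation) is an epimorphism because $\unit$ is simple and the map is nonzero. Tensoring it with $H$ gives an epimorphism $\mV^\ast\otimes\mV\otimes H\twoheadrightarrow H$, so $H=0$.

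The main subtlety is the compatibility between the tensor structure of $F_n$ and the maps: one has to check that $F_n(\mathrm{id}_{St_{n-1}}\otimes d)$ corresponds to $\mathrm{id}_{\mV}\otimes F_n(d)$ under the monoidal isomorphism. This is just naturality of the monoidal structure of $F_n$ from \cite{Ne}. Note also that exactness of $\Sigma$ in $\Rep\SL_2$ is not needed beyond the complex property; the splitting hypothesis carries all the information.
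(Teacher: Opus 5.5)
Your proposal is correct and follows the same route as the paper. You apply $F_n$ to the split sequence $St_{n-1}\otimes\Sigma$, identify the result with $\mV\otimes F_n(\Sigma)$ via the monoidal structure, and conclude from faithful exactness of $\mV\otimes-$; you simply make explicit two details the paper leaves implicit, namely that split exactness is encoded by splitting maps lying in the full subcategory $\cTb_n$ and hence preserved by $F_n$, and the evaluation argument showing $\mV\otimes H=0$ forces $H=0$.
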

\begin{proof}
Since $F_n$ is monoidal and additive, $F_n(St_{n-1}\otimes \Sigma)\cong \mV\otimes F_n(\Sigma)$ is split exact. By faithful exactness of $\mV\otimes-$, it follows that $F_n(\Sigma)$ is also exact.
\end{proof}

\subsubsection{}Assume $p>2$. If for some object $W\in \cTb_n$, a symmetric or exterior power $\Sym^jW$ or $\w^jW$ also belongs to $\cTb_n$, then the defining presentations \eqref{eq:PS} or \eqref{eq:PE} (both in $\Rep \SL_2$ and in $\Ver_{p^n}$) yield
canonical morphisms 
\[\Sym^j(F_n(W))\to F_n(\Sym^j W),\quad \mbox{or}\quad \w^j(F_n(W))\to F_n(\w^j W).\]
An important intermediate result in the current paper will be about establishing cases where such morphisms are isomorphisms. This is certainly not generally the case. Indeed, for $W=V_{p-2}=\bar{\unit}$ in $\sVec\subset\Ver_p$, we have $\w^j V_{p-2}\not=0$ for all $j$, while $\w^j L_{p-2}=0$ for $j\ge p$.


\section{Some representation-theoretic results}\label{sec:repth}
Let $\Bbbk$ be an algebraically closed field of characteristic $p>2$.
\subsection{General linear group}
We consider an $m$-dimensional $\k$-vector space $V$ with corresponding general linear group $\GL_m=\GL(V)$. We closely follow the conventions from \cite{Jantzen}. In particular, by picking an ordered basis of $V$ we consider subgroups $T<B<\GL(V)$ where $T$ is the maximal torus of diagonal matrices and $B$ the (negative) Borel subgroup of matrices with 0 above the diagonal. The weight lattice is $X=\Hom(T,\mG_m)$ and $2\rho\in X$ is the sum of positive roots. Simple $\GL(V)$-representations are labelled by their highest weight, with respect to the opposite Borel $B^+$ of $B$, as $L(\lambda)$, for dominant $\lambda\in X$.

The following type of result is standard, see for instance \cite{Stein}, but we provide a proof as we are mainly interested in the case $p<m$, often excluded.

\begin{lemma}\label{lem:GLsplit}
For each $j\in\mZ_{>1}$, the exact sequence \eqref{eq:PE2}
\[\Sym^2V\otimes \w^{j-2} V\to V\otimes \w^{j-1}V\to \mathlarger{\wedge}^j V\to 0\]
becomes split in $\Rep \operatorname{GL}(V)$ after applying $L((p-1)\rho)\otimes-$.
\end{lemma}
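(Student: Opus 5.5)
The plan is to use the standard facts on good filtrations and tilting modules for $\GL(V)$ from \cite{Jantzen}, together with Donkin's result that $St:=L((p-1)\rho)$ is tilting and that $St\otimes-$ sends modules with a good filtration (respectively a Weyl filtration) to modules with a good filtration (respectively a Weyl filtration). None of these requires $p\ge m$. Write $\nabla(\lambda)$ and $\Delta(\lambda)$ for induced and Weyl modules, and $\omega_k=(1^k)$. Splitting the three-term sequence means two things: $St\otimes V\otimes\w^{j-1}V\to St\otimes\w^jV$ is a split epimorphism, and $St\otimes \Sym^2V\otimes\w^{j-2}V$ maps onto the kernel $St\otimes K$ as a split epimorphism. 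Here $K:=\ker(V\otimes\w^{j-1}V\to\w^jV)$, and $St\otimes K$ is the kernel after tensoring because $St\otimes-$ is exact.

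First I will identify the terms. The modules $\w^kV=L(\omega_k)=\nabla(\omega_k)=\Delta(\omega_k)$ are minuscule, hence tilting, so $V\otimes\w^{j-1}V$ is tilting. Its character is $s_{(2,1^{j-2})}+s_{(1^j)}$. Since $(2,1^{j-2})>(1^j)$, it has a good filtration with $\nabla(2,1^{j-2})$ as a submodule and quotient $\nabla(\omega_j)=\w^jV$. Any surjection onto $\nabla(\omega_j)$ is unique up to scalar, since $\Hom(V\otimes\w^{j-1}V,\w^jV)$ is one-dimensional by the character count. Hence $K\cong\nabla(2,1^{j-2})$. Moreover, $St\otimes\w^jV$ is tilting and $St\otimes K$ has a good filtration, so $\mathrm{Ext}^1(St\otimes\w^jV,St\otimes K)=0$ by $\mathrm{Ext}^1(\Delta\text{-filtered},\nabla\text{-filtered})=0$. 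This gives the splitting of the right-hand epimorphism. Consequently $St\otimes K$ is a direct summand of the tilting module $St\otimes V\otimes\w^{j-1}V$, and so is itself tilting.

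For the left map, note that $\Sym^2V=\nabla(2\omega_1)$, which holds for $p>2$; in fact $\Sym^2V$ is simple. Hence $\Sym^2V\otimes\w^{j-2}V$ has a good filtration, and its character is $s_{(3,1^{j-3})}+s_{(2,1^{j-2})}$, with the first term absent when $j=2$. I will show that the map $\Sym^2V\otimes\w^{j-2}V\to K$ is surjective; this is exactness of \eqref{eq:PE2}. By the character count, its kernel $N$ then has the character of $\nabla(3,1^{j-3})$. The point is to show $N$ has a good filtration, ideally $N\cong\nabla(3,1^{j-3})$. For this I will use that a module with a good filtration admits one in which the factors with larger highest weight occur lower, so that $\nabla(3,1^{j-3})$ embeds with quotient $\nabla(2,1^{j-2})$. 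The surjection to $K\cong\nabla(2,1^{j-2})$ must then factor through this quotient, because $\Hom(\nabla(3,1^{j-3}),\nabla(2,1^{j-2}))=0$: the socle of $\nabla(\mu)$ is $L(\mu)$, and $(3,1^{j-3})\neq(2,1^{j-2})$. It follows that $N=\nabla(3,1^{j-3})$. Then $St\otimes N$ has a good filtration and $St\otimes K$ is tilting, so $\mathrm{Ext}^1(St\otimes K,St\otimes N)=0$, and the left epimorphism splits.

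The main obstacle is the identification of $K$ and $N$ with induced modules independently of whether $p<m$. This needs the right ordering of factors in the good filtrations and the Hom computations between $\nabla$'s. If this becomes delicate, the fallback is to check the two needed $\mathrm{Ext}^1$-vanishings directly. Either way the argument relies on the facts that $St\otimes\nabla(\lambda)$ has a good filtration and that tensor products of tilting modules are tilting, both of which are valid for all $p$.
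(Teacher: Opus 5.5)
There is a genuine gap. Your identifications $K\cong\nabla(2,1^{j-2})$ and $N\cong\nabla(3,1^{j-3})$ fail in precisely the range the lemma is meant for, namely $p\le m$. In the application $m=p^{n-1}$ and all $j\le m$ occur.

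You have the ordering of good filtrations reversed. Since $\mathrm{Ext}^1_G(\nabla(\lambda),\nabla(\mu))\neq 0$ forces $\lambda>\mu$, one can arrange the \emph{larger} weights to occur as quotients, not as submodules.

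The correct picture is this. $K$ is the kernel of a surjection between Weyl-filtered modules: $V\otimes\w^{j-1}V$ is tilting and $\w^jV=\Delta(\omega_j)$. Hence $K$ has a Weyl filtration, and $K\cong\Delta(2,1^{j-2})$. Applying $\Hom(\Delta(\omega_j),-)$ shows that $K$ has a good filtration if and only if $V\otimes\w^{j-1}V\to\w^jV$ already splits \emph{before} tensoring with $St$. Up to scalar, the only map $\w^jV\to V\otimes\w^{j-1}V$ is comultiplication, and its composite with the wedge product is $j\cdot\mathrm{id}$. So for $p\mid j\le m$ there is no splitting, and $K$ is not $\nabla$-filtered.

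A concrete case is $m=j=p=3$. There the map is $V\otimes V^\ast\otimes\w^3V\to\w^3V$, given by trace tensored with the identity. Its kernel contains the one-dimensional submodule $\k\,\mathrm{id}_V\otimes\w^3V$, whereas $\nabla(2,1)$ has simple socle $L(2,1)$, which is not one-dimensional. The same reasoning applies to $N$: for $p>2$, $\Sym^2V\otimes\w^{j-2}V$ is tilting, so $N$ is a kernel of a surjection between Weyl-filtered modules and hence $N\cong\Delta(3,1^{j-3})$.

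Consequently $St\otimes K$ and $St\otimes N$ are a priori only Weyl-filtered. The two $\mathrm{Ext}^1$-vanishings you need do not follow from Donkin's general results on good filtrations and tilting modules. These are the vanishing from the tilting module $St\otimes\w^jV$ into $St\otimes K$, and from $St\otimes K$ into $St\otimes N$. The missing input is that $St\otimes K$ and $St\otimes N$ also have good filtrations. This depends on the specific small weights occurring in the terms, not on filtration theory alone.

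The paper supplies it by showing that $St\otimes W$ is tilting for every composition factor $W$ of every term:
\begin{itemize}
\item $St\otimes W\cong\Indu^G_B(\k_{(p-1)\rho}\otimes W)$.
\item All weights of $\k_{(p-1)\rho}\otimes W$ are dominant; for $p=3$ they are at least $\rho$-shifted dominant.
\item Kempf vanishing then gives $St\otimes W$ a good filtration, and self-duality makes it tilting.
\end{itemize}
Then $St\otimes K$ and $St\otimes N$ are filtered by tilting modules, hence tilting, and both short exact sequences split. Your fallback of checking the $\mathrm{Ext}^1$-vanishings directly would need exactly this input.
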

\begin{proof}
We claim that all simple constituents of all terms in the sequence become tilting modules after tensoring with the Steinberg module $St:=L((p-1)\rho)$ of $G:=\GL(V)$, so that the result follows from the extension-vanishing in \cite[Corollary~E.2]{Jantzen}.

To prove this claim, we can use \cite[3.19(4)]{Jantzen}:
\[St\;=\;\Indu^G_B(\k_{(p-1)\rho}),\quad\mbox{and hence}\quad St\otimes W\;\cong\; \Indu^G_B\left(\k_{(p-1)\rho}\otimes \Res^G_B W\right),\]
for any $W\in \Rep G$. For any simple constituent $W$ of a term of the exact sequence, all weight occurring with non-zero multiplicity in $\k_{(p-1)\rho}\otimes W$ are dominant (or, when $p=3$, at least $\mu+\rho$ is dominant for each such weight $\mu$). It then follows from Kempf's vanishing theorem \cite[Proposition~4.5]{Jantzen} (where we add in \cite[Proposition~5.4(a)]{Jantzen} in case $p=3$) that $St\otimes W$ has a good filtration. By self-duality it is a tilting module.
\end{proof}

In the diagram in Lemma~\ref{lem:diagram1} below, the first row is given by the presentation \eqref{eq:PS} for $\Sym^m (V^\ast\otimes V)$. The second row is the tensor product of the presentations \eqref{eq:PE} for $\w^m(V^\ast)$ and $\w^m V$, by making the identification 
\[\w^mV^\ast\otimes \w^m V\cong\;\unit,\quad \alpha_1\wedge \cdots\wedge \alpha_m \otimes v_1\wedge\cdots\wedge v_m\mapsto \sum_{\sigma\in S_m}(-1)^{|\sigma|}\alpha_1(v_{\sigma(1)})\cdots \alpha_m(v_{\sigma(m)}).\]
\begin{lemma}\label{lem:diagram1}
The solid diagram diagram described above can be completed to a commutative diagram in $\Rep \GL(V)$ of the form
\[\xymatrix{
\PS^m(V^\ast\otimes V)\ar[r]&(V^\ast\otimes V)^{\otimes m}\ar[dr]^{\ev^{\otimes m}}&&\\
\PE^m(V^\ast)\otimes V^{\otimes m}\oplus (V^\ast)^{\otimes m}\otimes \PE^m(V)\ar[r]\ar@{-->}[u]&(V^\ast)^{\otimes m}\otimes V^{\otimes m}\ar[r]\ar@{-->}[u]&\unit\ar[r]&0.
}\]
\end{lemma}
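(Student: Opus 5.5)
The plan is to write down both dashed arrows by explicit, manifestly $\GL(V)$-equivariant formulas built only from permutations of tensor factors. Then I check the triangle and the square by direct computation. No representation theory beyond naturality should be needed.

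\emph{Middle arrow.} Define $\varphi:(V^\ast)^{\otimes m}\otimes V^{\otimes m}\to (V^\ast\otimes V)^{\otimes m}$ by
\[\alpha_1\otimes\cdots\otimes\alpha_m\otimes v_1\otimes\cdots\otimes v_m\;\mapsto\;\sum_{\sigma\in S_m}(-1)^{|\sigma|}(\alpha_1\otimes v_{\sigma(1)})\otimes\cdots\otimes(\alpha_m\otimes v_{\sigma(m)}).\]
Each summand is a permutation of tensor factors, so $\varphi$ is equivariant. Applying $\ev^{\otimes m}$ to the image gives $\sum_\sigma(-1)^{|\sigma|}\prod_i\alpha_i(v_{\sigma(i)})$. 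This is exactly the composite of the bottom row's projection $(V^\ast)^{\otimes m}\otimes V^{\otimes m}\to\w^mV^\ast\otimes\w^mV$ with the identification with $\unit$ fixed before the lemma, so the triangle commutes.

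\emph{Left arrow.} Write $\varphi=\sum_\sigma(-1)^{|\sigma|}\varphi_\sigma$. I construct the lift summand by summand, for $\PE^m_i(V^\ast)\otimes V^{\otimes m}$ and for $(V^\ast)^{\otimes m}\otimes\PE^m_i(V)$ separately.

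Let $\tau=(i\;i{+}1)$ and let $s_i$ denote the swap of the tensor factors $i,i{+}1$ of $(V^\ast\otimes V)^{\otimes m}$. Consider inputs $x$ whose $V^\ast$-part lies in $X^{\otimes i}\otimes\Sym^2V^\ast\otimes X^{\otimes(m-i-2)}$ with $X=V^\ast$, i.e.\ is symmetric in positions $i,i{+}1$. A direct check gives $\varphi_{\tau\sigma}(x)=s_i\varphi_\sigma(x)$. Here $\tau\sigma$ swaps the $v$'s attached to positions $i$ and $i{+}1$, and on inputs symmetric in $\alpha_i,\alpha_{i+1}$ this is the same as swapping the whole factors $i,i{+}1$.

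Choose coset representatives $R$ for $\langle\tau\rangle\backslash S_m$. Then
\[\varphi(x)=\sum_{\sigma\in R}(-1)^{|\sigma|}(1-s_i)\varphi_\sigma(x)=2\sum_{\sigma\in R}(-1)^{|\sigma|}\tfrac{1-s_i}{2}\varphi_\sigma(x).\]
Since $p>2$, the operator $\tfrac{1-s_i}{2}$ is the projection onto the summand $\PS^m_i(V^\ast\otimes V)=(V^\ast\otimes V)^{\otimes i}\otimes\w^2(V^\ast\otimes V)\otimes\cdots$. So I define the dashed left map on this summand as
\[x\mapsto 2\sum_{\sigma\in R}(-1)^{|\sigma|}\tfrac{1-s_i}{2}\varphi_\sigma(x)\;\in\PS^m_i(V^\ast\otimes V).\]
This is equivariant, being built from permutations and a projection. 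Composed with the inclusion $\PS^m\to(V^\ast\otimes V)^{\otimes m}$ it equals $\varphi$ on that summand, so the square commutes there.

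The summands $(V^\ast)^{\otimes m}\otimes\PE^m_i(V)$ are handled symmetrically. There $v_{\sigma(\cdot)}$ is symmetric in two positions $i,i{+}1$ of $V$. Reindexing by $\sigma\mapsto\sigma\tau$ gives $\varphi_{\sigma\tau}(x)=\varphi_\sigma(x)$ on such inputs. This pairs up with opposite signs, so $\varphi$ vanishes on this summand and the lift can be taken to be $0$. I would double-check this by redoing the bookkeeping in the other convention.

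The main obstacle is the verification of the identity $\varphi_{\tau\sigma}=s_i\varphi_\sigma$ on inputs symmetric in $\alpha_i,\alpha_{i+1}$: one has to confirm that left versus right multiplication by $\tau$ corresponds to permuting $\alpha$'s versus $v$'s. The key structural point is that an explicit lift exists even though nothing is projective. This works because the top-left map is a sum of split inclusions (as $p\neq2$), and $\varphi$ decomposes explicitly through these summands.
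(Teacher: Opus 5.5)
Your proposal is correct and follows essentially the same route as the paper. Both use the antisymmetrizing middle arrow and the zero map on $(V^\ast)^{\otimes m}\otimes\PE^m(V)$; on $\PE^m_i(V^\ast)\otimes V^{\otimes m}$, pairing permutations shows the image lies in $\PS^m_i(V^\ast\otimes V)$. Your version makes this explicit, and also checks the vanishing on the second summand, which the paper leaves implicit.

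The only slip is the convention you already flagged. With $(\tau\sigma)(k)=\tau(\sigma(k))$, swapping the $v$'s at positions $i,i{+}1$ corresponds to $\sigma\mapsto\sigma\tau$, so you should use left cosets $S_m/\langle\tau\rangle$. Relabelling equal $v$'s corresponds to $\sigma\mapsto\tau\sigma$.
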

\begin{proof}
The right upward arrow can be chosen to be the morphism
\[ \alpha_1\otimes  \cdots\otimes \alpha_m \otimes v_1\otimes\cdots\otimes v_m\mapsto \sum_{\sigma\in S_m}(-1)^{|\sigma|}\alpha_1\otimes v_{\sigma(1)}\otimes \cdots \otimes\alpha_m\otimes v_{\sigma(m)},\]
clearly leading to a commutative triangle.

For the left arrow, we can choose the morphism to be zero on $(V^\ast)^{\otimes m}\otimes \PE^m(V)$. For the $\PE^m_i(V^\ast)\otimes V^{\otimes m}$ we deal with $i=0$, purely for notational convenience. The morphism 
\[\PE^m_0(V^\ast)\otimes V^{\otimes m}=\Sym^2(V^\ast)\otimes (V^\ast)^{\otimes m-2} \otimes V^{\otimes m}\to (V^\ast\otimes V)^{\otimes m},\] via the lower path, takes values in the subrepresentation spanned by vectors of the form
\[\sum_{\sigma\in S_m}(-1)^{|\sigma|}\alpha\otimes v_{\sigma(1)}\otimes \alpha\otimes v_{\sigma(2)}\otimes \alpha_3\otimes v_{\sigma(3)}\otimes \cdots \otimes\alpha_m\otimes v_{\sigma(m)}.\]
These all belong to $\w^2(V^\ast\otimes V)\otimes (V^\ast\otimes V)^{\otimes m-2}$ and hence the morphism factors indeed through $\PS_0^m(V^\ast\otimes V)\to (V^\ast\otimes V)^{\otimes m}$.
\end{proof}

\subsection{A consequence for $\SL_2$}

\begin{proposition}\label{prop:splitpres}
Let $L$ be a simple tilting module for $\SL_2$, then the exact sequence \eqref{eq:PE2}
\[\Sym^2L\otimes \w^{i-2} L\to L\otimes \w^{i-1}L\to \mathlarger{\wedge}^i L\to 0\]
is split in $\Rep \SL_2$ after applying $St_{j}\otimes-$ for all $i\in\mZ_{>1}$ if $m:=\dim L\le p^j$.
\end{proposition}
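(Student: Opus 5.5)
The plan is to reduce to Lemma~\ref{lem:GLsplit} by restricting along the homomorphism $\phi:\SL_2\to\GL(L)=\GL_m$ given by the action on $L$. Restriction along $\phi$ is a symmetric monoidal exact functor $\Rep\GL(L)\to\Rep\SL_2$. It sends the sequence \eqref{eq:PE2} for $V=L$ to the sequence \eqref{eq:PE2} for the $\SL_2$-module $L$. Hence Lemma~\ref{lem:GLsplit} gives that
\[\Res(St_{\GL})\otimes\bigl(\Sym^2L\otimes \w^{i-2} L\to L\otimes \w^{i-1}L\to \w^i L\to 0\bigr)\]
is split in $\Rep\SL_2$, where $St_{\GL}=L((p-1)\rho)$ for $\GL(L)$. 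A sequence that splits after tensoring with $M$ also splits after tensoring with $M\otimes M'$, and with any direct summand of such a module. So it suffices to show that $St_j$ is a direct summand of $Res(St_{\GL})\otimes M'$ for some $M'\in\Rep\SL_2$. That is the key step, and the hypothesis $m\le p^j$ should enter there.

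First I make $\phi$ explicit. The simple tilting modules of $\SL_2$ are $L_{ap^r-1}\cong St_r\otimes L_{a-1}^{(r)}$ with $1\le a\le p$, so $m=ap^r$. I choose a weight basis of $L$ so that the torus of $\SL_2$ lands in the diagonal torus $T$ of $\GL(L)$ and the Borel in $B$. Then $\Res(St_{\GL})$ has highest weight $\langle (p-1)\rho,\phi\rangle$, and $St_{\GL}$ is self-dual. It is convenient to use, as in the proof of Lemma~\ref{lem:GLsplit}, that $St_{\GL}=\Indu^{\GL}_{B}\k_{(p-1)\rho}$. This lets me control $\Res(St_{\GL})$ via its weights, for instance via the Kempf vanishing argument already used in the proof of Lemma~\ref{lem:GLsplit}.

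For the summand step I take $M'=\Res(St_{\GL})^\ast\otimes St_j$ and consider the composite
\[St_j\xrightarrow{\;\mathrm{coev}\;} \Res(St_{\GL})\otimes\Res(St_{\GL})^\ast\otimes St_j\xrightarrow{\;\mathrm{ev}\;} St_j.\]
A plain trace argument fails, since $\dim St_{\GL}$ is a power of $p$. Instead I plan to pass to $\Ver_{p^{j+1}}$, where $St_j$ maps to the simple projective (hence injective) object $\mV$. After replacing the middle term by its tilting part, coevaluation becomes a nonzero map out of a simple injective object, hence a split monomorphism. A splitting modulo the ideal $\bI_{j+1}$ lifts to a genuine splitting, because $\End(St_j)=\k$ and $St_j\notin\bI_{j+1}$. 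For this I need the middle term to be a tilting module, or at least to have $St_j$ split off as its tilting component. This is where $m\le p^j$ is used: it should bound the weights of $\Res(St_{\GL})\otimes St_j$ so that every weight $\mu$ occurring satisfies $\mu\ge -p^j$. The Kempf-vanishing argument from the proof of Lemma~\ref{lem:GLsplit}, applied to $St_j=\Indu^{\SL_2}_{B}(\k_{p^j-1})$, then shows that $St_j\otimes\Res(St_{\GL})$ has a good filtration and, by self-duality, is tilting.

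I expect the main obstacle to be exactly this weight estimate. The weights of $\Res(St_{\GL})$ can be much larger than $p^j$, so the naive bound fails. If it does, my fallback is to argue directly, as in Lemma~\ref{lem:GLsplit}, on the simple constituents of $\Sym^2L\otimes\w^{i-2}L$, $L\otimes\w^{i-1}L$ and $\w^iL$. I would show that each of them becomes tilting after tensoring with $St_j$, and conclude by the Ext-vanishing of \cite[Corollary~E.2]{Jantzen}. This uses that $\w^i L$ and its relatives are built from $St_r$ and Frobenius twists of small modules, with $ap^r\le p^j$ keeping all relevant weights within the allowed range.
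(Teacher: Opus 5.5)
Your reduction to Lemma~\ref{lem:GLsplit} matches the paper. So does your reformulation of the goal as ``$St_j$ is a direct summand of $R\otimes M'$'', with $R:=\Res L((p-1)\rho)$. Your mechanism for the last step would also be a valid substitute for the paper's appeal to the chain of tensor ideals: in $\Ver_{p^{j+1}}$ a nonzero map out of the simple injective $\mV$ splits, and the retraction lifts because $\End(St_j)=\k$.

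\textbf{The gap.} You never justify that this map is nonzero. Its image in $\Ver_{p^{j+1}}$ is $\mathrm{coev}_{F(R)}\otimes\mathrm{id}_{\mV}$. This is nonzero if and only if $F(R)\neq 0$, which for tilting $R$ means $R\notin\bI_{j+1}$. That is not automatic. If $R\in\bI_{j+1}$, then every $R\otimes M'$ with $M'$ tilting lies in the tensor ideal $\bI_{j+1}$, which does not contain $St_j$, so the summand you want cannot exist at all.

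Showing $R\notin\bI_{j+1}$ is the real content of the proposition, and it is the only place where $m\le p^j$ is needed. The paper does it as follows:
\begin{itemize}
\item It chooses the torus of $\GL(L)$ so that every simple root pulls back to $2$.
\item It computes $\ch R=g(x^p)/g(x)$ with $g(x)=\prod_{i=1}^{m-1}(x^i-x^{-i})^{a_i}$.
\item It uses \cite[Lemma~2.11]{Ne}: the cell of a tilting module is detected by evaluating its character at roots of unity.
\item At a primitive $p^{j+1}$-th root of unity $\omega$, neither $g(\omega)$ nor $g(\omega^p)$ vanishes, precisely because $m-1<p^j$.
\end{itemize}

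\textbf{Where you do use the hypothesis.} There the argument fails, and it is not needed anyway. Tilting-ness of $R$, and hence of $R\otimes R^\ast\otimes St_j$, holds for every $m$. Indeed, $L((p-1)\rho)$ is a direct summand of a tensor power of the natural $\GL(L)$-module, by Schur--Weyl duality and projectivity of the corresponding simple module of the symmetric group. So $R$ is a summand of a tensor power of the tilting module $L$.

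The Kempf-type weight bound you propose is false, as you suspected: the highest weight of $R$ is $(p-1)m(m^2-1)/6$, far above $p^j$.

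Your fallback also does not give the statement. $\w^iL$ has a composition factor of highest weight $i(m-i)$, which is about $p^{2j}/4$ when $m=p^j$. That is well outside the range where $St_j\otimes-$ is known to produce tilting modules. As Remark~\ref{rem:staySL2} records, a weight-bound argument carried out purely inside $\Rep\SL_2$ only gives splitting by $St_{2a-1}$ for $L=St_a$. The proposition, and its use in Theorem~\ref{thm:w}, needs splitting by $St_a$.
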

\begin{proof}
The $\SL_2$-representation $L$ defines a group homomorphism $\phi:\SL_2\to \GL(L)$. The (ordered) weight basis of $L=L_{m-1}$ over $\SL_2$ leads to a choice of maximal torus in $\GL(L)$ such that the corresponding pullback on weights is
\begin{equation}\label{eq:resweight}
X\to\mZ,\quad \alpha\mapsto 2,
\end{equation}
for every simple positive root $\alpha$.

By Lemma~\ref{lem:GLsplit}, the short exact sequence splits after taking the tensor product with $\Res L((p-1)\rho)$, where $\Res$ stands for the restriction along $\phi$. It is well-known that $L((p-1)\rho)$, as an $\SL(L)$-representation, is a direct summand of a tensor power of $L$ (this follows for instance from Schur-Weyl duality and projectivity of the corresponding simple module of the symmetric group). In particular, $\Res L((p-1)\rho)$ is in $\Tilt \SL_2$.

To determine in which cell this tilting module lives, we apply \cite[Lemma~2.11]{Ne}, stating that the cell can be determined by evaluating the character of $\Res L((p-1)\rho)$ on roots of 1 in $\k$. Moreover, \eqref{eq:resweight} implies that the computation of the character is identical to the case studied in \cite[Lemma~2.12]{Ne} (for a principal $SL_2$ in a reductive group in characteristic above the Coxeter number). In particular, 
\[\mZ[x,x^{-1}]\,\ni\,\ch \Res L((p-1)\rho)\;=\;\frac{g(x^p)}{g(x)}\quad\mbox{with}\quad g(x)=\prod_{i=1}^{m-1} (x^i-x^{-i})^{a_i}, \]
with $a_i$ the number of positive roots of $\GL(T)$ that are the sum of $i$ simple positive roots. If $\omega_{p^s}$ denotes a primitive $p^s$-th root of $1$, then clearly $g(\omega_{p^s})=0$ if and only if $p^s\le m-1$. Hence the character of $\Res L((p-1)\rho)$ does not evaluate to zero on $\omega_{p^{j+1}}$, meaning $\Res L((p-1)\rho)\not\in \bI_{j+1}$. Consequently, it follows that $St_j$ is a direct summand of a tensor product of $\Res L((p-1)\rho)$ (with another tilting module), so also $St_j$ splits the sequence.
\end{proof}

\begin{remark}\label{rem:staySL2}
Proposition~\ref{prop:splitpres} is perfect for our main application. However, there are variations covering different cases. For instances, by working entirely within $\Rep \SL_2$, and using \cite[Lemma~3.3]{BEO} or equivalently \cite[Lemma~4.3.4]{AbEnv}, it follows that
the exact sequence \eqref{eq:PE2}
\[\Sym^2T_s\otimes \w^{i-2} T_s\to T_s\otimes \w^{i-1}T_s\to \mathlarger{\wedge}^i T_s\to 0\]
is split in $\Rep \SL_2$ after applying $St_{j}\otimes-$ whenever
\[2s+(i-2)(s+3-i)\;<\; p^{j+1}.\]
In other words, for a fixed $s$, the sequences (for all $i$) are split by $St_j$ if
\[s^2/4+5s/2+1/4\;<\; p^{j+1}.\]

The value of Proposition~\ref{prop:splitpres} lies in the fact that the sequences for $L=T_{p^a-1}=St_a$ can be spilt by $St_a$ for all $a$, whereas the argument within this remark only predicts splitting by $St_{2a-1}$. 
Conversely, the argument within the current remark includes the observation that the sequences for $L=T_1$ are already split themselves, whereas Proposition~\ref{prop:splitpres} requires tensoring with $St_1$.
\end{remark}


\section{Main result}\label{sec:main}

In this section we assume that $p=\mathrm{char}(\bk)>2$.

\subsection{Exterior powers}We fix an $n\in\mZ_{>1}$.

\begin{theorem}\label{thm:w}
Let $L$ be a simple $\SL_2$-tilting module of dimension $m\le p^{n-1}$, so $L=T_{m-1}=L_{m-1}$. Then $\w^i L\in \cTb_n$ for all $i\in\mN$, and moreover the canonical morphism in $\Ver_{p^n}$
\[\w^i F_n(L)\;\to\; F_n(\w^i L)\]
is an isomorphism.
In particular $\w^{m}F_n(L)=\unit$ and $\w^iF_n(L)=0$ for $i>m$.
\end{theorem}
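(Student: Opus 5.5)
Proof proposal. The plan is to argue by induction on $i$, using the presentation \eqref{eq:PE2} on both sides and Lemma~\ref{lem:splitexact} combined with Proposition~\ref{prop:splitpres} with $j=n-1$. That choice is allowed precisely because $m\le p^{n-1}$.

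\emph{Step 1: the objects lie in $\cTb_n$.} Proposition~\ref{prop:splitpres} says the $\SL_2$-sequence
\[\Sym^2L\otimes \w^{i-2} L\to L\otimes \w^{i-1}L\to \w^i L\to 0\]
becomes split after tensoring with $St_{n-1}$. Hence $St_{n-1}\otimes\w^iL$ is a direct summand of $St_{n-1}\otimes L\otimes\w^{i-1}L$. Assume inductively that $\w^{i-1}L\in\cTb_n$. Then $St_{n-1}\otimes\w^{i-1}L$ is tilting, and therefore so is $St_{n-1}\otimes L\otimes \w^{i-1}L$, since $L$ is tilting and $\Tilt\SL_2$ is closed under tensor products. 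Direct summands of tilting modules are tilting, so $\w^iL\in\cTb_n$. The base cases $i=0,1$ are trivial.

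Similarly, $\Sym^2L$ is a summand of $L^{\otimes2}$ because $p>2$. So $\Sym^2L$ is tilting and lies in $\cTb_n$, and $\Sym^2L\otimes\w^{i-2}L\in\cTb_n$ as well. All three terms of the sequence therefore lie in $\cTb_n$.

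\emph{Step 2: the canonical map is an isomorphism.} By Lemma~\ref{lem:splitexact} with $n-1$ in the role of $j$, applying $F_n$ gives an exact sequence in $\Ver_{p^n}$:
\[\Sym^2F_n(L)\otimes F_n(\w^{i-2}L)\to F_n(L)\otimes F_n(\w^{i-1}L)\to F_n(\w^iL)\to0.\]
Here $F_n(\Sym^2L)\cong\Sym^2F_n(L)$, since $\Sym^2$ is the image of the idempotent $(1+s)/2$ and $F_n$ is symmetric monoidal. By induction, $F_n(\w^{k}L)\cong\w^kF_n(L)$ for $k=i-1,i-2$. Substituting these isomorphisms, the sequence becomes the presentation \eqref{eq:PE2} for $\w^iF_n(L)$.

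We then compare the two presentations via the canonical maps. This requires checking that the induction isomorphisms are compatible with the maps in the diagram, i.e.\ that they are the canonical ones and intertwine the projections $X\otimes\w^{k-1}X\to\w^kX$. This compatibility follows from naturality of the defining quotients \eqref{eq:PE}. With the left two vertical maps isomorphisms and both rows exact, the five lemma (or simply comparison of cokernels) shows that $\w^iF_n(L)\to F_n(\w^iL)$ is an isomorphism. I expect this bookkeeping to be the main obstacle, though it is routine.

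\emph{Step 3: the final claims.} In $\Rep\SL_2$ we have $\w^mL\cong\k$, since it is one-dimensional and $\SL_2$ has no nontrivial characters, and $\w^iL=0$ for $i>m$. Applying $F_n$ then gives $\w^mF_n(L)=\unit$ and $\w^iF_n(L)=0$ for $i>m$.
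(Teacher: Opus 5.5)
Your proposal is correct and follows the paper's own argument: induction on $i$ through the presentation \eqref{eq:PE2}. Proposition~\ref{prop:splitpres} with $j=n-1$ (allowed since $m\le p^{n-1}$) shows that $\w^iL\in\cTb_n$, and via Lemma~\ref{lem:splitexact} it makes the image sequence exact, after which that sequence is identified with the defining presentation of $\w^iF_n(L)$. You spell out details the paper leaves implicit, namely that $\Sym^2L$ is tilting, that the canonical maps are compatible, and why $\w^mL\cong\k$; one small slip is that the parameter $j=n-1$ belongs to Proposition~\ref{prop:splitpres}, not to Lemma~\ref{lem:splitexact}, which has no such parameter.
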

\begin{proof}
We use induction on $i$. The cases $i\le 1$ (even $i<p$) are obviously true. Assume that the claim is true for $i\le d$. In particular, when we consider the exact sequence \eqref{eq:PE2} for $j=d+1$ and $X=L$, the first two terms are by assumption in $\cTb_n$. By Proposition~\ref{prop:splitpres}, the exact sequence is split after applying $St_{n-1}\otimes-$, so also $\w^{d+1}L$ is in $\cTb_n$.
Moreover, Lemma~\ref{lem:splitexact} thus shows that that application of $F_n$ on the presentation \eqref{eq:PE2} of $\w^{d+1}L$ produces an exact sequence which can be identified with a defining presentation of $\w^{d+1}F_n(L)$ in $\Ver_{p^n}$, concluding the proof.
\end{proof}

\begin{example}\label{ex:w}
\begin{enumerate}
\item For $0\le j\le n-2$ and $1\le i< p$, in $\Ver_{p^n}$ we have
\[\w^{ip^j}V_{ip^j-1}=\unit,\quad\mbox{and}\quad \w^{s}V_{ip^j-1}=0\quad\mbox{for }s>ip^j.\]
Theorem~\ref{thm:w} also shows that one of the $p$-adic dimensions introduced in \cite{EHO} coincides with the `corresponding vector space dimension', meaning
\[\Dim_-(V_{ip^j-1})\;=\; ip^j\;\in\;\mZ_p. \]
\item We have $\w^{p^{n-1}}\mV\cong\unit$ and $\Dim_-(\mV)=p^{n-1}$ in $\Ver_{p^n}$.
\end{enumerate}

\end{example}

\begin{remark}
We cannot remove the condition $m\le p^{n-1}$ in Theorem~\ref{thm:w}. Indeed, we have $\w^i V_1\not=0$ for all $i$ in $\Ver_3$, since $V_1=F_1(L_1)$ is the odd line. However, some additional cases to Theorem~\ref{thm:w} can be derived from Remark~\ref{rem:staySL2}.
\end{remark}

\begin{remark}\label{rem:MN}
Recall from \cite{BEO} that $V_{p^{n-1}(p-2)}$ is the odd line $\bar{\unit}\in\sVec\subset\Ver_{p^n}$ and that, for $i,j$ as in Example~\ref{ex:w}, we have $V_{(p-2)p^{n-1}+ip^j-1}\cong V_{(p-2)p^{n-1}}\otimes V_{ip^j-1} $, so that
\[\Sym^{ip^j}\left(V_{(p-2)p^{n-1}+ip^j-1}\right)\not=0,\quad \Sym^{ip^j+1}\left(V_{(p-2)p^{n-1}+ip^j-1}\right)=0,\quad\mbox{and}\]
\[\Sym^{p^{n-1}}\left(V_{(p-1)p^{n-1}-1}\right)\not=0,\quad \Sym^{p^{n-1}+1}\left(V_{(p-1)p^{n-1}-1}\right)=0.\]
\end{remark}

%

\subsection{Geometric reductivity}

\begin{theorem}\label{thm:GR}
The category $\Ver_{p^\infty}$ is geometrically reductive.
\end{theorem}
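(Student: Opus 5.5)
The plan is to reduce geometric reductivity to a single test epimorphism, namely the evaluation $\ev_{\mV}:\mV^\ast\otimes\mV\to\unit$ in $\Ver_{p^n}$. For this morphism I would construct a splitting of $\Sym^{m}(\ev_{\mV})$, with $m=p^{n-1}$, as the image under $F_n$ of the diagram in Lemma~\ref{lem:diagram1}.

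\emph{Reduction to $\ev_{\mV}$.} Any object of $\Ver_{p^\infty}$ lies in some $\Ver_{p^n}$, and symmetric powers and splittings can be computed there. We may take $n\ge 2$: for $n=1$ we have $\Ver_p$ semisimple, and moreover $\Ver_p\subset\Ver_{p^2}$. Let $f:X\to\unit$ be non-zero, hence an epimorphism since $\unit$ is simple. The object $\mV$ is projective, so $\mV^\ast\otimes\mV$ is projective. Hence $\ev_{\mV}$, which is non-zero, lifts along $f$ to some $g:\mV^\ast\otimes\mV\to X$ with $f\circ g=\ev_{\mV}$. If $u:\unit\to \Sym^m(\mV^\ast\otimes\mV)$ splits $\Sym^m(\ev_{\mV})$, then $\Sym^m(g)\circ u$ splits $\Sym^m(f)$, by functoriality of $\Sym^m$.

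\emph{Constructing $u$.} Take $V=St_{n-1}$, so that $m=\dim V=p^{n-1}$, and pull back the diagram of Lemma~\ref{lem:diagram1} along $\phi:\SL_2\to \GL(V)$. I would first check that all terms lie in $\Tilt\SL_2\subset\cTb_n$.
\begin{itemize}
\item $(V^\ast\otimes V)^{\otimes m}$ and $(V^\ast)^{\otimes m}\otimes V^{\otimes m}$ are tensor products of tilting modules.
\item $\PS^m(V^\ast\otimes V)$ and $\PE^m$ involve $\w^2(V^\ast\otimes V)$ and $\Sym^2V$, $\Sym^2V^\ast$. Since $p>2$, these are direct summands of tensor squares, hence tilting.
\end{itemize}
We then apply the symmetric monoidal functor $F_n$. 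Because $F_n$ is symmetric monoidal and additive, it carries the decomposition $X^{\otimes 2}=\Sym^2X\oplus\w^2X$ to the corresponding decomposition in $\Ver_{p^n}$. Hence the image of the top row, extended by its cokernel, is exactly the defining presentation \eqref{eq:PS} of $\Sym^m(\mV^\ast\otimes\mV)$, and the diagonal arrow becomes $\ev_{\mV}^{\otimes m}$.

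\emph{Exactness of the bottom row.} Its image is the tensor product of the images of the presentations \eqref{eq:PE} for $\w^m V^\ast$ and $\w^mV$, followed by $F_n$ of the identification $\w^mV^\ast\otimes\w^mV\cong\unit$. By Theorem~\ref{thm:w} (with $L=St_{n-1}$, $m\le p^{n-1}$), and by its dual version for $\mV^\ast$ (apply duality, or note that $St_{n-1}$ is self-dual), the canonical maps $\w^m\mV\to F_n(\w^mV)\cong\unit$ are isomorphisms. Therefore $F_n$ of each of the two presentations is exact. Their tensor product remains right exact, because $\otimes$ is exact in $\Ver_{p^n}$, so the bottom row in $\Ver_{p^n}$ is exact with cokernel $\unit$.

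\emph{Conclusion.} The dashed vertical arrows send the image of the left bottom term into the image of $\PS^m$, so the middle vertical arrow descends to a morphism of cokernels $u:\unit\to\Sym^m(\mV^\ast\otimes\mV)$. The commutative triangle in Lemma~\ref{lem:diagram1} says that $\ev^{\otimes m}$ composed with the middle vertical arrow equals the bottom map onto $\unit$. Since $\ev^{\otimes m}$ factors through $\Sym^m(\ev_{\mV})$ on $\Sym^m(\mV^\ast\otimes\mV)$, this gives $\Sym^m(\ev_{\mV})\circ u=\mathrm{id}_\unit$.

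The main obstacle, beyond Theorem~\ref{thm:w} itself, is the bookkeeping of identifications. One must confirm that the map $\unit\to\unit$ obtained really is the identity, or at least a non-zero scalar, which suffices. This means checking that $F_n$ of the determinant pairing $\w^mV^\ast\otimes\w^mV\to\unit$ agrees with the isomorphism produced in $\Ver_{p^n}$. It does, because that pairing is an isomorphism in $\Rep\SL_2$ between objects sent to $\unit$, and $F_n$ preserves isomorphisms.
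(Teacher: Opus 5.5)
Your proposal is correct and follows essentially the same route as the paper: apply $F_n$ to the diagram of Lemma~\ref{lem:diagram1} for $V=St_{n-1}$, use Theorem~\ref{thm:w} to get exactness of the bottom row and hence a splitting of $\Sym^{p^{n-1}}$ of the evaluation on the projective object $\mV^\ast\otimes\mV$, and then reduce an arbitrary non-zero $X\to\unit$ to this case via projectivity. Your closing worry about scalars is unnecessary: $u$ is obtained by factoring through $F_n$ of the bottom epimorphism itself, so $\Sym^m(\ev_{\mV})\circ u=\mathrm{id}_\unit$ follows directly from the commutative triangle.
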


Set $n\in\mZ_{>1}$. We start the proof with the following lemma.

\begin{lemma}\label{lem:diagram2}
For $m:=p^{n-1}$, there is a commutative diagram in $\Ver_{p^n}$
\[\xymatrix{
\PS^m(\mV\otimes \mV)\ar[r]&(\mV\otimes \mV)^{\otimes m}\ar[r]\ar[dr]^{\ev^{\otimes m}}&\Sym^m(\mV\otimes \mV)\ar[r]\ar[d]&0\\
\PE^m(\mV)\otimes \mV^{\otimes m}\oplus \mV^{\otimes m}\otimes \PE^m(\mV)\ar[r]\ar[u]&\mV^{\otimes m}\otimes \mV^{\otimes m}\ar[r]\ar[u]&\unit\ar[r]&0,
}\]
where the rows are exact and the top row is \eqref{eq:PS}.
\end{lemma}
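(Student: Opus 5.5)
The plan is to transport the diagram of Lemma~\ref{lem:diagram1} along $\phi:\SL_2\to\GL(L)$ for $L=St_{n-1}$, which has dimension $m=p^{n-1}$, and then apply $F_n$. Here $\mV=F_n(St_{n-1})$ is self-dual, since $St_{n-1}\cong St_{n-1}^\ast$. Restricting Lemma~\ref{lem:diagram1} along $\phi$ gives a commutative diagram in $\Rep\SL_2$. Its bottom row is the tensor product of the presentations \eqref{eq:PE} for $\w^m St_{n-1}^\ast$ and $\w^m St_{n-1}$, combined with $\w^m St_{n-1}^\ast\otimes\w^m St_{n-1}\cong\unit$. After identifying $St_{n-1}^\ast\cong St_{n-1}$, all terms except possibly the $\w$-powers are tensor products of $St_{n-1}$, its tensor powers and its symmetric/exterior squares. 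These lie in $\Tilt\SL_2$, and in fact in $\bI_{n-1}$, because each is a summand of a tensor power of $St_{n-1}$ ($p>2$).

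We then apply the symmetric monoidal functor $F_n$ to the whole diagram. Since $F_n$ is symmetric monoidal, it sends $\PS^m(St_{n-1}\otimes St_{n-1})$ to $\PS^m(\mV\otimes\mV)$. It also sends the top map to the corresponding map in $\Ver_{p^n}$, because it preserves the braiding and hence the idempotent $(1-s)/2$ cutting out $\w^2$. Likewise it sends $\PE^m$ terms to $\PE^m$ terms and $\ev$ to $\ev$. The upward arrows of Lemma~\ref{lem:diagram1} are built from braidings, signs and inclusions, so their images give the upward arrows of the desired diagram, and commutativity is preserved.

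The key remaining point, and the main obstacle, is exactness of the bottom row in $\Ver_{p^n}$, together with the identification of its right term with $\unit$. For this we invoke Theorem~\ref{thm:w} with $L=St_{n-1}$, which applies since $m=p^{n-1}\le p^{n-1}$. It gives $\w^iSt_{n-1}\in\cTb_n$, and the canonical map $\w^m\mV\to F_n(\w^m St_{n-1})=F_n(\unit)=\unit$ is an isomorphism. Consequently, applying $F_n$ to the presentation \eqref{eq:PE} of $\w^m St_{n-1}$ yields the defining presentation of $\w^m\mV$, which is exact. The bottom row is the tensor product of two such right-exact presentations. Since $\otimes$ is right exact in each variable in $\Ver_{p^n}$, the sequence $A'\otimes B\oplus A\otimes B'\to A\otimes B\to \w^m\mV\otimes\w^m\mV\to 0$ is exact, and the pairing identifies the last term with $\unit$. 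We must check that this pairing is the image of the $\GL$-level pairing used in Lemma~\ref{lem:diagram1}, which follows from functoriality of the canonical isomorphism.

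The top row is exact by definition \eqref{eq:PS} in $\Ver_{p^n}$. The vertical arrow $\Sym^m(\mV\otimes\mV)\to\unit$ is the factorisation of $\ev^{\otimes m}$ through $\Sym^m$; it exists because $\ev^{\otimes m}$ is symmetric under permuting the factors $\mV\otimes\mV$. This yields the diagram.
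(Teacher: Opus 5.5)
Your proposal is correct and follows the paper's proof. You restrict the diagram of Lemma~\ref{lem:diagram1} for $V=St_{n-1}$ along $\SL_2\to\GL(St_{n-1})$, apply $F_n$, take the top row to be \eqref{eq:PS} by definition, and deduce exactness of the bottom row from the isomorphism $\w^{p^{n-1}}\mV\cong F_n(\w^{p^{n-1}}St_{n-1})=\unit$ of Theorem~\ref{thm:w}; your extra details (right exactness of $\otimes$, factoring $\ev^{\otimes m}$ through $\Sym^m$ by $S_m$-invariance) only spell out what the paper leaves implicit, and the pairing check you flag is automatic because the bottom row is by definition the $F_n$-image of the $\SL_2$-level row.
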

\begin{proof}
Consider the commutative diagram in Lemma~\ref{lem:diagram1} for $V= St_{n-1}$. We can restrict it to $\SL_2$ via the defining $\SL_2\to\GL(St_{n-1})$, obtaining a diagram in $\Tilt \SL_2$. Subsequently, we can apply $F_n$.

Now the diagonal morphism clearly factors through the symmetric power and we obtain the diagram in $\Ver_{p^n}$ in the lemma. The top row is exact by definition, while the bottom row is exact as a consequence of the canonical morphism $\w^{p^{n-1}}\mV\to F_n(\w^{p^{n-1}} V)=\unit$ being an isomorphism, see Theorem~\ref{thm:w}.
\end{proof}

\begin{corollary}\label{cor:split}
For $\ev_\mV:\mV\otimes \mV\to\unit$, the epimorphism $\Sym^{p^{n-1}}(\mV\otimes \mV)\to \unit$ is split.
\end{corollary}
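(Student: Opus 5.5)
The plan is to read the splitting off directly from the commutative diagram of Lemma~\ref{lem:diagram2}, with $m=p^{n-1}$. Write $\pi:(\mV\otimes\mV)^{\otimes m}\tto\Sym^m(\mV\otimes\mV)$ for the top-row epimorphism and $q:\Sym^m(\mV\otimes\mV)\to\unit$ for the vertical map induced by $\ev^{\otimes m}$. First we identify $q$ as the epimorphism in the statement. Since $\ev_\mV^{\otimes m}$ factors through $\pi$ as $q\circ\pi$, the map $q$ is exactly the morphism $\Sym^m(\mV\otimes\mV)\to\unit$ induced by $\ev_\mV$ via the algebra structure, i.e.\ the $m$-th symmetric power of $\ev_\mV$ viewed as a morphism to $\Sym^m\unit=\unit$.

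Next, let $\beta:\mV^{\otimes m}\otimes\mV^{\otimes m}\to(\mV\otimes\mV)^{\otimes m}$ be the middle upward arrow, $\gamma$ the left upward arrow, and $e:\mV^{\otimes m}\otimes\mV^{\otimes m}\tto\unit$ the bottom-row epimorphism. Let $\delta$ be the bottom-left horizontal map, so that $e$ is the cokernel of $\delta$ because the bottom row is exact. We want a morphism $s:\unit\to\Sym^m(\mV\otimes\mV)$ with $\pi\circ\beta=s\circ e$. The left square gives
\[\beta\circ\delta=\iota\circ\gamma,\]
where $\iota:\PS^m(\mV\otimes\mV)\to(\mV\otimes\mV)^{\otimes m}$ is the top-left map. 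Since $\pi\circ\iota=0$ by exactness of the top row, we get $\pi\circ\beta\circ\delta=0$. Hence $\pi\circ\beta$ factors uniquely through the cokernel $e$ of $\delta$, which yields the desired $s$.

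Finally we check that $q\circ s=\mathrm{id}_\unit$. Commutativity of the triangle gives
\[q\circ s\circ e=q\circ\pi\circ\beta=\ev^{\otimes m}\circ\beta=e.\]
Because $e$ is an epimorphism, $q\circ s=\mathrm{id}$, and so $q$ is split.

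There is no real obstacle here; all the substance lies in Lemma~\ref{lem:diagram2}, namely the exactness of the bottom row, which rests on Theorem~\ref{thm:w}. The only point needing care is the first step: confirming that the right vertical arrow in Lemma~\ref{lem:diagram2} really is the epimorphism $\Sym^{p^{n-1}}(\mV\otimes\mV)\to\unit$ coming from $\ev_\mV$. This follows from the uniqueness of factorisations through the epimorphism $\pi$.
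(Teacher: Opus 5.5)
Your proposal is correct and is the paper's argument. The paper simply observes that the composite $\mV^{\otimes p^{n-1}}\otimes\mV^{\otimes p^{n-1}}\to\Sym^{p^{n-1}}(\mV\otimes\mV)$ factors through the bottom-row epimorphism onto $\unit$, which is exactly your map $s$ obtained from $\pi\circ\beta\circ\delta=0$ and the exactness of the bottom row. You spell out the diagram chase that the paper leaves implicit: identifying the vertical arrow with $\Sym^{p^{n-1}}(\ev_\mV)$ and cancelling the epimorphism $e$.
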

\begin{proof}
The composite diagonal morphism $\mV^{\otimes p^{n-1}}\otimes \mV^{\otimes p^{n-1}}\to\Sym^{p^{n-1}}(\mV\otimes \mV)$ in the diagram of Lemma~\ref{lem:diagram2} factors via $\unit$, providing a splitting.
\end{proof}
\begin{proof}[Proof of Theorem~\ref{thm:GR}]
It suffices to show that $\Ver_{p^n}$ is GR for arbitrarily high $n$.
To show that the finite tensor category $\Ver_{p^n}$ is GR, it suffices to prove that for some epimorphism $P\to\unit$ with $P\in \Ver_{p^n}$ projective, there exists $i>0$ for which $\Sym^{i}P\to\unit$ is split. Since $\mV$, so also $\mV^{\otimes 2}$, is projective, the conclusion follows from Corollary~\ref{cor:split}.
\end{proof}


\section{Geometric reductivity of more general tensor categories}\label{sec:inv}
In this section we let $\k$ be an algebraically closed field of characteristic $p>0$.

\subsection{New connections between conjectures}

The following theorem, reduces two conjectures regarding geometric reductivity to other conjectures in the literature.

\begin{theorem}\label{thm:conj}
Assume that $\Ver_{p^\infty}$ is MN, \it{i.e.} \cite[Conjecture~3.2.3]{CAlg} is valid.
\begin{enumerate}
\item If \cite[Conjecture~1.4]{BEO} is valid for finite tensor categories, then every finite tensor category is GR. Moreover, then every finitely generated ind-algebra in a finite tensor category is of finite type \it{i.e.} \cite[Conjecture~9.3.5]{CEO2} is true.
\item If \cite[Conjecture~1.4]{BEO} is valid, then also \cite[Conjecture~3.1.6(1)]{CAlg} is valid. Concretely, a tensor category of moderate growth $\cC$ is GR if and only if for every finite generated $A\in\CAlg\cC$ the $\bk$-algebra $A^{\inv}$ is finitely generated.
\end{enumerate}
\end{theorem}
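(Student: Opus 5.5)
The plan is to reduce both parts to a tensor functor into $\Ver_{p^\infty}$ and to exploit the combination MN plus GR for $\Ver_{p^\infty}$. The GR half is Theorem~\ref{thm:GR}; the MN half is assumed.

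\cite[Conjecture~1.4]{BEO} predicts that every tensor category of moderate growth admits a tensor functor to $\Ver_{p^\infty}$. For a finite category the image lands in some $\Ver_{p^n}$. By \cite{CEO2, Del90}, a tensor category $\cC$ with a tensor functor $F:\cC\to\Ver_{p^n}$ is equivalent to $\Rep G$ for an affine group scheme $G$ in $\Ver_{p^n}$, namely the automorphism group scheme of the fibre functor. The argument then proceeds in three steps.

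\emph{Step 1 (algebras in $\Ver_{p^\infty}$).} Invoke the results of \cite{CAlg} for a category that is both MN and GR. In that setting, every finitely generated $A\in\CAlg\Ver_{p^n}$ is finite over a finitely generated subalgebra. Moreover, for $B$ with a $G$-action in $\Ver_{p^n}$ with $G$ affine, the invariant theory behaves as classically. Concretely, I would first use MN to see that $A$ is integral over the subalgebra generated by its ``$\VEC$-part'' (the image of $\Sym$ of the invariant part of a high enough symmetric power). I would then apply GR to the resulting finiteness statements.

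\emph{Step 2 (finite case, part (1)).} For finite $\cC=\Rep G$, the group scheme $G$ should be finite, since $\cC$ is finite. Thus $\mathcal{O}(G)$ is a finite-length object of $\Ver_{p^n}$. For a finitely generated $A\in\CAlg\cC$, view $F(A)$ in $\CAlg\Ver_{p^n}$. The invariants are $A^{\inv}=(F(A)^{G})^{\inv}$. I would adapt the classical argument that invariants under a finite group scheme are finitely generated: $F(A)$ is integral over the subalgebra generated by norms. Because $\mathcal{O}(G)$ has finite length, the coaction gives each element a ``characteristic polynomial'' whose coefficients are invariant. Combined with Step~1 and noetherianity, this yields finite generation of $A^{\inv}$, and Lemma~\ref{lem:315} then gives GR. The same integrality shows that $A$ is a finite module over a finitely generated invariant subalgebra, which I expect to be precisely ``finite type'' in \cite[Conjecture~9.3.5]{CEO2}.

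\emph{Step 3 (moderate growth, part (2)).} The ``if'' direction is Lemma~\ref{lem:315}. For ``only if'', assume $\cC$ is GR and, via \cite[Conjecture~1.4]{BEO}, that $\cC=\Rep G$ for $G$ in $\Ver_{p^n}$. Then imitate Nagata's proof. For finitely generated $A$, form $B:=F(A)$, which is finitely generated in $\Ver_{p^n}$; by Step~1 its invariants there are finitely generated. GR of $\cC$ supplies, for each invariant ideal, the lifting of invariants modulo the ideal up to $p$-powers, as in the classical proof. A noetherian induction on $A^{\inv}$ then gives finite generation.

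The main obstacle is Step~1 and its interaction with the non-classical objects. Classical arguments use elements, whereas here one must replace ``$f^{p^k}$'' by maps $\Sym^{m}X\to\unit$, and integrality must be formulated through MN: high symmetric powers of non-invertible simples vanish, so nilpotent parts are bounded. I would try first to prove that $B$ is a finite module over $\Sym(U)$, where $U$ ranges over the $\VEC$-type summands. I would take this from \cite{CAlg}'s treatment of MN categories, and then run the classical Nagata argument inside $\VEC$.
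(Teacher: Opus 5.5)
You have a genuine gap, located where you yourself place the ``main obstacle'': nothing in your plan turns the action of the group scheme $G$ in $\Ver_{p^n}$ into something the classical arguments can handle. The norm argument you want to adapt in Step~2 is Lemma~\ref{lem:Gfin}. It takes, for an element $a\in A$, the characteristic polynomial of multiplication by $\rho(a)$ on the free $A$-module $\bk[G]\otimes A$ of finite rank.

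For $G$ in $\Ver_{p^n}$, however, $\bk[G]$ is a genuine object of $\Ver_{p^n}$, not a finite-dimensional vector space. So there is no determinant available: top exterior powers need not exist or be invertible, and already $\w^i\bar{\unit}\neq 0$ for all $i$. Moreover, the relevant ``elements'' of $A$ are morphisms out of non-trivial objects. Passing to the honest $\bk$-algebra $\Gamma(A)$ does not rescue the argument. The coaction maps $\Gamma(A)$ into $\Gamma(A\otimes\bk[G])$, which is not $\Gamma(A)\otimes\Gamma(\bk[G])$, so no classical group scheme acts on $\Gamma(A)$.

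Your Step~1 is fine insofar as it says finitely generated algebras in the MN+GR category $\Ver_{p^n}$ are finite over their invariants. But that concerns $\Gamma(F(A))$, not $A^{\inv}=A^G$. The further claim that ``invariant theory behaves as classically'' is precisely what has to be proved. Note also that $\cC$ is $\Rep(G,\varepsilon)\subset\Rep_{\cD}G$, not all of $\Rep G$.

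The missing idea is the Frobenius twist of \cite{CS}. Since $\cD$ is MN, \cite[Proposition~4.19]{CS} shows that for $r\gg 0$ the quotient $G^{[r]}$, represented by $\bk[G]^{[r]}$, is an ordinary affine group scheme over $\bk$ with infinitesimal kernel $G_r$. The $G$-action on $A^{[r]}\subset A$ factors through $G^{[r]}$, and $\Gamma(A^{[r]})$ becomes a classical $G^{[r]}$-algebra.

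For part~(1) one then chains finiteness statements:
\begin{itemize}
\item $A$ is finite over $A^{[r]}$;
\item $A^{[r]}$ is finite over the finitely generated algebra $\Gamma(A^{[r]})$, which is where your Step~1 is used;
\item $\Gamma(A^{[r]})$ is finite over $A':=\Gamma(A^{[r]})^{G^{[r]}}$, by the classical Lemma~\ref{lem:Gfin} applied to the finite group scheme $G^{[r]}$.
\end{itemize}
Since $A'\subset A^{\inv}\subset A$ with $A'$ noetherian, $A^{\inv}$ is finitely generated and $A$ is finite over it. GR then follows from Lemma~\ref{lem:315}.

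For part~(2), the same twist gives $1\to N\to G\to Q\to 1$. Here $N$ is finite and $Q$ is the quotient of $G^{[r]}$ by the central image of $\pi(\cD)$, an ordinary group scheme. It is geometrically reductive because $\Rep_{\bk}Q\subset\cC$. Then $A^{\inv}=(A^N)^Q$, and one applies part~(1) to $N$ and the classical Nagata theorem to $Q$.

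Your Step~3 instead re-runs Nagata's proof inside $\cC$. The paper remarks that this might be feasible, but you supply none of its ingredients: lifting invariants modulo invariant ideals from categorical GR, noetherianity of $A$, and the graded reduction. Your appeal to ``its invariants there are finitely generated'' again only concerns $\Gamma(F(A))$, not $A^{\inv}$.
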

The theorem will follow from some unconditional statements, proved in the next subsection.
First we recall the notion, used in Theorem~\ref{thm:conj}, of finite type algebras from \cite{CEO2}:
\begin{definition}
An algebra $A\in\CAlg\cC$ is of {\bf finite type} if the $\bk$-algebra $A^{\inv}:=\Hom(\unit, A)$ is finitely generated and $A$ is a finite $A^{\inv}$-module.
\end{definition}

We recall a classical result concerning finite type algebras. Note that, via Lemma~\ref{lem:315}, it actually implies that finite group schemes are geometrically reductive, see \cite[Theorem~1]{Wa}.
\begin{lemma}\label{lem:Gfin}
Let $G$ be a finite group scheme over $\bk$, and $A$ an ind-algebra in $\Rep G$. Then $A$ is (as a $\bk$-algebra) integral over its subalgebra $A^G$ of invariants. In particular, if $A$ is a finitely generated algebra, then so is $A^G$, and then $A$ is a finite $A^G$-module (\it{i.e.} $A$ is of finite type as an ind-algebra in $\Rep G$). 
\end{lemma}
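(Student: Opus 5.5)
The proof is the classical argument of Noether and Nagata, using that $G$ is finite. First I would reduce to the case where $A$ is an honest $\bk$-algebra with a $G$-action. An ind-algebra in $\Rep G$ is a commutative $\bk$-algebra $A$ with a rational $G$-action, that is, an $\mathcal{O}(G)$-comodule algebra structure $\Delta:A\to A\otimes \mathcal{O}(G)$, and $A^{\inv}=A^G$. Write $N:=\dim_\bk\mathcal{O}(G)<\infty$. For integrality, fix $a\in A$. I would use the ``norm'' construction. The coaction gives an algebra map $\Delta:A\to A\otimes \mathcal{O}(G)$, and $A\otimes\mathcal{O}(G)$ is a free $A$-module of rank $N$ (via the left factor). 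Take the characteristic polynomial of multiplication by $\Delta(a)$ on this free module:
\[\chi_a(t)=\det\big(t-\Delta(a)\big)\in A[t].\]
By Cayley–Hamilton, $\chi_a(\Delta(a))=0$. Applying the counit $\mathrm{id}\otimes\varepsilon$, which is an algebra map sending $\Delta(a)\mapsto a$ and fixing the coefficients in $A\otimes 1$, gives $\chi_a(a)=0$. So $a$ is a root of a monic polynomial of degree $N$.

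The key step is to check that the coefficients of $\chi_a$ lie in $A^G$. Consider the two maps $A\to A\otimes\mathcal{O}(G)$ given by $\Delta$ and by $x\mapsto x\otimes 1$. Invariance of $c\in A$ means exactly that these two maps agree on $c$. By coassociativity, $(\Delta\otimes\mathrm{id})\Delta(a)=(\mathrm{id}\otimes\Delta_G)\Delta(a)$. Also, determinants of multiplication operators are compatible with base change of the free module $A\otimes\mathcal{O}(G)$ along algebra maps $A\to B$. Comparing the two base changes along $\Delta$ and along $x\mapsto x\otimes1$, the second is isomorphic to the first twisted by the automorphism of $B\otimes\mathcal{O}(G)$ induced by the comultiplication (a Galois/torsor-type isomorphism $\mathcal{O}(G)\otimes\mathcal{O}(G)\cong \mathcal{O}(G)\otimes\mathcal{O}(G)$, $f\otimes g\mapsto \Delta_G(f)(1\otimes g)$). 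Conjugate operators have the same determinant, so $\Delta(\chi_a)=\chi_a\otimes 1$, that is, the coefficients are invariant. This bookkeeping is the main obstacle. For reduced $G$ (an abstract finite group) it is just the familiar product $\prod_{g}(t-g\cdot a)$, and I would present the Hopf-algebraic version modelled on that case. Alternatively, I could cite it as standard from Waterhouse.

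Finally, the finiteness statements. If $A$ is generated by $a_1,\dots,a_r$, let $B\subset A^G$ be the subalgebra generated by the coefficients of $\chi_{a_1},\dots,\chi_{a_r}$. Then $B$ is finitely generated, hence Noetherian, and $A$ is generated as a $B$-module by the monomials $a^{\mathbf{e}}$ with all $e_k<N$. So $A$ is a finite $B$-module. Then $A^G\subset A$ is a $B$-submodule of a finite module over a Noetherian ring, so it is a finite $B$-module and hence a finitely generated algebra. Since $B\subset A^G$, $A$ is also finite over $A^G$, which is the finite-type claim.
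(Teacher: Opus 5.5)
Your proposal is correct and follows essentially the paper's proof: take the characteristic polynomial of multiplication by the coaction image of $a$ on the free $A$-module $A\otimes\mathcal{O}(G)$, show its coefficients are invariant by comparing the two base changes via coassociativity and a Galois-type isomorphism, apply Cayley--Hamilton, and finish with the same Noetherian finiteness argument. (The paper leaves the Galois isomorphism implicit. With your right coaction the conjugating map must be linear over the $B$-factor, i.e.\ $h\otimes f\mapsto (h\otimes 1)\Delta_G(f)$; the formula you wrote is the one for a left coaction.) The only minor difference is that you get $\chi_a(a)=0$ by applying $\mathrm{id}\otimes\varepsilon$, whereas the paper uses invariance of the coefficients to rewrite $\chi_a(\rho(a))=0$ as $\rho(\chi_a(a))=0$ and then uses injectivity of $\rho$.
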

\begin{proof}
Denote the coaction by $\rho:A\to A\otimes \bk[G]$ and the comultiplication by $\Delta:\bk[G]\to\bk[G]\otimes \bk[G]$. For $a\in A$, denote the characteristic polynomial of multiplication by $\rho(a)$ on the $A$-module $\bk[G]\otimes A$ by $P(t)\in A[t]$. Then we can observe that 
\[\rho(P(t))\;=\;1\otimes P[t]\;\in \bk[G]\otimes A[t],\]
by interpreting the left (resp. right) as the characteristic polynomial of multiplication by $(\bk[G]\otimes\rho)\circ\rho(a)$ (resp. $(\Delta\otimes A)\circ\rho(a)$) on the $\bk[G]\otimes A$-module $\bk[G]\otimes \bk[G]\otimes A$ (for the canonical right action) and observing that the latter two elements are identical. It also follows from Cayley-Hamilton that $P(\rho(a))=0$, which we can thus rewrite as $\rho(P(a))=0$. By injectivity of $\rho$ we find $P(a)=0$, showing integrality.

For the second sentence, let $a_1,\ldots, a_r$ be generators of $A$ as a $\bk$-algebra. Then, for each $i$, there is a (monic) polynomial with coefficients in $A^G$ that annihilates $a_i$. We let $B$ be the subalgebra of $A^G$ generated by the coefficients of these polynomials. Now $A$ is a finite $B$-module by construction, and hence (since $B$ is noetherian) also $A^G$ is a finite $B$-module and thus also finitely generated as a $\bk$-algebra.
\end{proof}

\subsection{Unconditional results}\label{sec:uncon}

In this section, we fix $\cD$ to be a GR + MN tensor category that is also a union $\cup_{n>0}\cD_n$ of finite tensor subcategories $\cD_1\subset\cD_2\subset\cdots$. Note that such $\cD$ must be incompressible, by \cite[Theorem~7.2.3(2)]{CEO2}.

\begin{theorem}\label{thm:comb}
Consider a tensor category $\cC$ with tensor functor $F:\cC\to\cD$. 
\begin{enumerate}
\item If $\cC$ is finite, then it is GR and every finitely generated ind-algebra in $\cC$ is of finite type.
\item If $\cC$ is GR, then for every finitely generated $A\in \CAlg\cC$, the $\bk$-algebra $A^{\inv}$ is finitely generated.
\end{enumerate}
\end{theorem}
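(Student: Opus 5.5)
The plan is to transport everything along $F$ and use the known structure of the incompressible category $\cD$. Two standard ingredients will be used. First, by \cite{CEO2} (internal Tannakian reconstruction over an incompressible category), a tensor category $\cC$ with a tensor functor $F:\cC\to\cD$ is equivalent to $\Rep(G,\epsilon)$ for an affine group scheme $G$ in $\cD$. Here $\cO(G)\in\CAlg\cD$ is the coend $\int^X F(X)^\ast\otimes F(X)$, and for $A\in\CAlg\cC$ we get $A^{\inv}=(F(A)^{G})^{\inv}$, i.e. the invariants are computed in two steps: take $G$-invariants internally in $\cD$, then take $\Hom_\cD(\unit,-)$. Second, since $\cD$ is GR and MN, the results of \cite{CAlg, CEO2} apply. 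In particular, in a GR+MN category every finitely generated $B\in\CAlg\cD$ is of finite type: $B^{\inv}$ is finitely generated and $B$ is a finite $B^{\inv}$-module. Indeed, MN makes $\Sym X$ finite over the image of the invariant algebra of a Frobenius-twist type subobject, and GR lets us lift invariants.

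For (1), assume $\cC$ is finite. Then $\cC$ is finitely generated, so its image lies in some $\cD_n$, and $G$ is a finite group scheme in $\cD_n$, meaning $\cO(G)$ is an actual object of $\cD_n$. I will mimic Lemma~\ref{lem:Gfin} internally. Take $A\in\CAlg\cC$ finitely generated and set $B=F(A)$. I will show that $B$ is integral over its internal invariant subalgebra $B^G$, by forming "norms": the characteristic polynomial of multiplication on the finite $B$-module $B\otimes\cO(G)$. In a tensor category the determinant must be replaced by a suitable top exterior power, and this is where MN helps: $\w^{\mathrm{top}}$ of a free module of "rank" an object exists in the correct sense. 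The cleaner alternative, which I would try first, is to reduce everything to statements about morphisms $X\to\unit$. For GR we take an epimorphism $P\to\unit$ with $P$ projective in $\cC$, exactly as in the proof of Theorem~\ref{thm:GR}. Then $\Sym P$ is a finitely generated algebra; by finite type (integrality of $\Sym P$ over $(\Sym P)^{\inv}$) some invariant in degree $m>0$ maps nontrivially to $\unit$ under $\Sym^m P\to\unit$, which gives a splitting. The finite type statement for $A$ then follows by combining integrality over $B^G$ with finite type of $B^G$ in $\cD$, and noetherianity as in Lemma~\ref{lem:Gfin}.

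For (2), assume $\cC$ is GR. The plan follows Nagata's argument: GR of $\cC$ yields that $A^{\inv}$ is integral over the image of the invariants of a polynomial algebra, and we combine this with finite type in $\cD$. Concretely, write $A$ as a quotient of $\Sym X$. It suffices to show that $(\Sym X)^{\inv}$ is finitely generated and that $(\Sym X)^{\inv}\to A^{\inv}$ makes $A^{\inv}$ integral. The latter follows from GR by the standard lifting argument: an invariant $a\in A$ gives $\unit\to A$ and a pullback $Y\to\unit$ of the surjection, and GR yields a power $a^m$ lifting. For finite generation I would use the two-step description $A^{\inv}=(F(A)^G)^{\inv}$. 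The Reynolds-type control comes from GR of $\cC$ (for the $G$-step) and of $\cD$ (for the outer step), and the noetherianity comes from finite type of $F(\Sym X)=\Sym F(X)$ in $\cD$.

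The main obstacle is the integrality of $F(A)$ over its $G$-invariants in (1). Lemma~\ref{lem:Gfin} uses characteristic polynomials, which are unavailable for non-vector-space objects. I expect to circumvent this by applying the finite type property of $\cD$ to $\cO(G)$ and to the algebra $F(A)\otimes\cO(G)$, via the coaction; if that fails, I would fall back on a Frobenius-kernel argument reducing to a classical finite group scheme.
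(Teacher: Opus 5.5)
\textbf{Part (1).} The decisive step of (1) is missing. Your plan rests on $F(A)$ being integral, or better finite, over its internal invariants $F(A)^G$, and neither route you propose establishes this.

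\begin{itemize}
\item A characteristic polynomial of multiplication on $F(A)\otimes\bk[G]$ needs a determinant. But $\bk[G]$ is an object of $\cD$, not a vector space, and top exterior powers need not exist there: already $\w^j\bar\unit\neq 0$ for all $j$ for the odd line $\bar\unit\in\sVec\subset\Ver_{p^n}$. MN constrains symmetric powers, not exterior ones, so it does not rescue this.
\item Even granting integrality, you cannot invoke finite type in $\cD$ for $F(A)^G$ without first knowing it is finitely generated. That is part of what must be shown.
\item Your ``cleaner alternative'' for GR presupposes finite type in $\cC$; it is essentially Lemma~\ref{lem:315}. So it does not bypass the problem.
\end{itemize}

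The missing idea is to pass to the Frobenius \emph{twist}, not the Frobenius kernel. The kernel $G_r$ is an infinitesimal group scheme in $\cD$, not a classical one. The paper's argument runs as follows.
\begin{itemize}
\item For large $r$ the quotient $G^{[r]}$ is an ordinary group scheme over $\bk$, and here a finite one.
\item The subalgebra $A^{[r]}\subset A$ is finitely generated, and $A$ is a finite $A^{[r]}$-module \cite{CS}.
\item Finite type in $\cD$ applied to $A^{[r]}$ makes $\Gamma(A^{[r]})$ finitely generated, with $A^{[r]}$ finite over it.
\item $G$ acts on $\Gamma(A^{[r]})$ through $G^{[r]}$. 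So the classical Lemma~\ref{lem:Gfin} produces a finitely generated $A'=\Gamma(A^{[r]})^{G^{[r]}}\subset A^{\inv}$ over which $A$ is finite.
\end{itemize}
Noetherianity of $A'$ then yields both halves of finite type, and GR follows from Lemma~\ref{lem:315}.

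\textbf{Part (2).} Part (2) is a list of ingredients rather than an argument.
\begin{itemize}
\item GR gives no Reynolds operator in characteristic $p$, neither for $\cC$ nor for $\cD$.
\item Your reduction is insufficient. Finite generation of $(\Sym X)^{\inv}$ together with integrality of $A^{\inv}$ over its image does not give finite generation of $A^{\inv}$. Integrality does not imply finiteness without finite generation: for instance $\bk\oplus\bigoplus_{i\ge 0}\bk\,\epsilon x^i\subset\bk[x,\epsilon]/(\epsilon^2)$ is integral over $\bk$ but not finitely generated. This is exactly why Nagata's proof needs a noetherian induction.
\item Finite generation of $(\Sym X)^{\inv}$ is itself an instance of the statement, and you give no argument for it.
\end{itemize}

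The paper avoids redoing Nagata internally. The image of the finite group $\pi(\cD)$ in the classical group $G^{[r]}$ is central. Quotienting by it gives $1\to N\to G\to Q\to 1$ with $N$ finite and $\Rep_{\bk}Q$ a tensor subcategory of $\cC=\Rep(G,\varepsilon)$, so $Q$ is geometrically reductive. Part (1) applied to $\Rep_{\cD}N$, plus the Hilbert basis property, makes $A^N$ finitely generated. Classical Nagata \cite{Na} for $Q$ then gives $A^{\inv}=(A^N)^Q$ finitely generated.
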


Before proving the theorem, we review the notion of Frobenius twists in the sense of \cite{CS}. For $R\in\CAlg\cD$ we let $R^{[1]}$ be the image of the $p$-fold multiplication map $(R^{\otimes p})^{S_p}\to R$ restricted to the $S_p$-invariants $(R^{\otimes p})^{S_p}\subset R^{\otimes p}$. For $s>1$ we define iteratively $R^{[s]}=(R^{[s-1]})^{[1]}$. If~$G$ is an affine group scheme in $\cD$, see \cite{BP, CAlg, CS} for background, then $\bk[G]^{[s]}$ represents a quotient group scheme $G^{[s]}$, leading to a short exact sequence
\[1\to G_{s}\to G\to G^{[s]}\to 1.\]
The $s$-th Frobenius kernel $G_s$ of~$G$ is an infinitesimal group scheme, see \cite[Proposition~4.19]{CS}.

\begin{proof}[Proof of Theorem~\ref{thm:comb}]
For clarity, we will write $\Gamma:\Ind\cD\to\VEC$ for the functor taking invariants in $\cD$ and reserve $(-)^\inv$ for the corresponding functor for $\cC$.
Since any finitely generated algebra in $\CAlg\cC$ belongs to (the ind-completion of) a finitely generated tensor subcategory of $\cC$, we can without loss of generality assume that $\cC$ is finitely generated. It then follows that $F$ takes values in a finitely generated tensor subcategory of $\cD$, hence in one of the finite $\cD_n\subset \cD$. We can and will thus also assume that $\cD$ is a finite tensor category.

Via $F$, and following \cite[\S 8]{Del90}, we can and will realise $\cC$ as a tensor subcategory of the representation category $\Rep G=\Rep_{\cD}G$ in $\cD$ of an affine group scheme $G$ in $\cD$. More concretely the fundamental group $\pi(\cD)$ of $\cD$ has a canonical action on every object of $\cD$, there is a group homomorphism $\varepsilon:\pi(\cD)\to G$, such that the adjoint action of $\pi(\cD)$ on $\bk[G]$ is the canonical action, and $\cC=\Rep(G,\varepsilon)$ is the category of $G$-representations for which the restriction along $\varepsilon$ produces the canonical action. By~\cite[Corollaire~8.18]{Del90}, for an ind-object $X$ in $\cC\subset\Rep G$, we have $X^\inv=X^G$, the maximal $G$-invariant subobject, where the left-hand side is a priori $\Gamma(X^G)$.

Since $\cC$ is finitely generated, $G$ is of finite type, see \cite[Lemma~4.3.3]{CEO2}. By \cite[Proposition~4.19]{CS}, it follows that $G^{[r]}$ is an affine group scheme over $\bk$, for large enough $r\in\mN$, which we now fix.

Let $A$ be a finitely generated ind-algebra in $\cC\subset \Rep G$, which we thus really interpret as an ind-algebra in $\cD$ equipped with an action of $G$. We consider $\bk$-subalgebras
\[\xymatrix{
A^{\inv}=A^G=\Gamma(A^G)\ar@{^{(}->}[r]&\Gamma(A)\ar@{^{(}->}[r]&A\\
A'\ar@{^{(}->}[r]\ar@{^{(}->}[u]&\Gamma(A^{[r]}).\ar@{^{(}->}[u]
}\]
To define $A'$, we must first observe that the co-action $A\to A\otimes \bk[G]$
leads to a co-action
\[\Gamma(A^{[r]})\subset A^{[r]}\to (A\otimes \bk[G])^{[r]}= A^{[r]}\otimes \bk[G^{[r]}].\]
In particular the $G$-action on $A^{[r]}\subset A$ factors via the quotient $G^{[r]}$. Moreover,
since $\Gamma(A^{[r]})$ and $\bk[G^{[r]}]$ both belong to $\VEC\subset\Ind\cD$, it follows that the subalgebra $\Gamma(A^{[r]})$ of $A^{[r]}$ is stable under this action of $G^{[r]}$ (or $G$). Hence, we can define $A'$ as the algebra of $G^{[r]}$-invariants in $\Gamma(A^{[r]})$, which is thus a subalgebra of $A$ on which $G$ acts trivially, justifying the upwards inclusion.

Now, to prove (1), assume that $\cC$ is a finite tensor category. Then $G$ is finite, {\it i.e.} $\bk[G]\in\cD$ as it is a quotient of $\oplus_P F(P\otimes P^\ast$), where $P$ runs over the indecomposable projective objects of $\cC$. We can observe the following:
\begin{enumerate}
\item[(i)] $A^{[r]}$ is finitely generated by \cite[Lemma~4.4(3)]{CS}.
\item[(ii)] $\Gamma(A^{[r]})$ is finitely generated and $A^{[r]}$ is a finite $\Gamma(A^{[r]})$-module by (i) and \cite[Proposition~2.9]{CS}.
\item[(iii)] $A'$ is finitely generated and $\Gamma(A^{[r]})$ is a finite $A'$-module by (ii) and Lemma~\ref{lem:Gfin}.
\item[(iv)] $A$ is a finite $A^{[r]}$-module by \cite[Lemma~4.4]{CS} and hence a finite $\Gamma(A^{[r]})$-module by (ii) and so finally a finite $A'$-module by (iii).
\item[(v)] $A^{\inv}$ is a finite $A'$-module, as a submodule of the finite $A'$-module $A$ from (iv), where $A'$ is noetherian by (iii).
\item[(vi)] $A^{\inv}$ is a finitely generated algebra, by the combination of (iii) and (v). 
\item[(vii)] $A$ is a finite $A^{\inv}$-module, since it is already a finite $A'$-module by (iv).
\end{enumerate}
Claims (vi) and (vii) show that $A$ is of finite type. That $\cC$ is GR then is implied by Lemma~\ref{lem:315}, which concludes the proof of part (1).

Now we prove part (2). We keep notation and assumptions as before the previous paragraph, and assume that $\cC$ is GR. Since $\cD$ is finite, $\pi(\cD)$ is finite. Since $G^{[r]}$ is an ordinary $\bk$-group scheme, the adjoint action of $\pi(\cD)$, via $\pi(\cD)\to G\to G^{[r]}$, on $\bk[G^{[r]}]$ is trivial. In other words, the image of $\pi(\cD)\to G^{[r]}$ is central. We take the quotient $Q$ of $G^{[r]}$ with respect to this central subgroup, leading to a short exact sequence
\[1\to N\to G\to Q\to 1,\]
where $N$ is still a finite group scheme (since $G_r$ and $\pi(\cD)$ were finite). As any tensor subcategory, the tensor subcategory $\Rep_{\bk }Q=\Rep_{\cD}(Q,\varepsilon)$ of $\cC$ is again GR, so $Q$ is geometrically reductive. Now for a finitely generated ind-algebra $A$ in $\cC$, we can observe that
\[A^\inv=A^G=(A^{N})^{Q}.\] 
By part (1) applied to the finite tensor category $\Rep_{\cD}N$, the $\bk$-algebra $\Gamma(A^{N})$ is finitely generated  and $A$ is a finite $\Gamma(A^{N})$-module. Hence, by the Hilbert Basis Property, see \cite[Lemma~6.4.4]{CAlg}, also the submodule $A^{N}\subset A$ is a finite $\Gamma(A^{N})$-module, so $A^{N}$ is a finitely generated algebra. Now by the classical version of the theorem (Nagata's Theorem from \cite{Na}) applied to the geometrically reductive group $Q$, it follows that $A^{\inv}$ is finitely generated. 
\end{proof}

\begin{remark}
\begin{enumerate}
\item Theorem~\ref{thm:comb}(1) for the special case $\cD=\Ver_p$ already appeared as \cite[Theorem~1.2]{Ve}. Many ideas in the proof of the latter appear again in the above proof.
\item Another approach to proving Theorem~\ref{thm:comb}(2) would by to mimic the classical proof of \cite{Na, BF} closer, rather than reducing to it.
\end{enumerate}

\end{remark}

\subsection{Conclusion}
By construction $\Ver_{p^\infty}=\cup_n\Ver_{p^n}$ is a union of finite tensor categories. It is GR by Theorem~A. If we thus assume the conjecture that it is also MN, it satisfies the conditions on $\cD$ in \S \ref{sec:uncon}. Further, \cite[Conjecture~1.4]{BEO} predicts that any tensor of moderate growth admits a tensor functor to $\Ver_{p^\infty}$. The latter two conjectures thus predict that for any appropriate tensor category, the assumptions in Theorem~\ref{thm:comb} are satisfied, so that Theorem~\ref{thm:conj} follows.

\subsection*{Acknowledgements}
The author thanks Pavel Etingof, Joseph Newton and Alexander Sherman for useful discussions. The research was partly supported by ARC grants FT220100125 and DP250100762.


\begin{thebibliography}
	{EGNO}
	
	

\bibitem[BNPS]{Stein} C.P.~Bendel, D.K.~Nakano, C.~Pillen, P.~Sobaje:
On tensoring with the Steinberg representation.
Transform. Groups 25 (2020), no. 4, 981--1008.

\bibitem[BE]{BE}  D.~Benson, P.~Etingof: Symmetric tensor categories in characteristic 2. Adv. Math. 351 (2019), 967--999.

\bibitem[BEO]{BEO} D.~Benson, P.~Etingof, V.~Ostrik: New incompressible symmetric tensor categories in positive characteristic. Duke Math. J. 172 (2023), no. 1, 105–200.

\bibitem[BP]{BP} D.~Benson, J.~Pevtsova: Group schemes and their Lie algebras over a symmetric tensor category. arXiv:2507.02031.





\bibitem[BF]{BF}H.~Borsari, W.~Ferrer Santos:
Geometrically reductive Hopf algebras.
J. Algebra 152 (1992), no. 1, 65--77. 



\bibitem[Co1]{Selecta} K.~Coulembier: Tensor ideals, Deligne categories and invariant theory. Selecta Math. (N.S.) 24 (2018), no. 5, 4659--4710. 



\bibitem[Co2]{AbEnv}  K.~Coulembier: Monoidal abelian envelopes. Compos. Math. 157 (2021), no. 7, 1584--1609.

\bibitem[Co3]{CAlg}  K.~Coulembier: Commutative algebra in tensor categories. Transform. Groups 31 (2026), no. 2, 1225--1272.

\bibitem[Co4]{Geom} K.~Coulembier: Algebraic geometry in tensor categories. To appear in Michigan Mathematical Journal. arXiv:2311.02264


\bibitem[CEO1]{CEO} K.~Coulembier, P.~Etingof, V.~Ostrik: On Frobenius exact symmetric tensor categories. With an appendix by A.~Kleshchev. Ann. of Math. (2) 197 (2023), no. 3, 1235--1279.


\bibitem[CEO2]{CEO2}	K.~Coulembier, P.~Etingof, V.~Ostrik: Incompressible tensor categories. Adv. Math. 457 (2024), Paper No. 109935, 65 pp.
	 


\bibitem[CS1]{CS} K.~Coulembier, A.~Sherman:
Homogeneous spaces in tensor categories. arXiv:2505.04848.

\bibitem[CS2]{CS2} K.~Coulembier, A.~Sherman:
Absolutely flat algebras in tensor categories. In preparation.

\bibitem[Dec]{Dec25} T.D.~Décoppet: Higher Verlinde Categories: The Mixed Case. arXiv:2407.20211.


	
	\bibitem[De1]{Del90} P.~Deligne: Cat\'egories tannakiennes. The Grothendieck Festschrift, Vol. II, 111--195, Progr. Math., 87, Birkh\"auser Boston, Boston, MA, 1990. 
	
	\bibitem[De2]{Del02} P.~Deligne: Cat\'egories tensorielles. Mosc. Math. J. 2 (2002), no. 2, 227--248.	 
	

\bibitem[EHO]{EHO}P.~Etingof, N.~Harman, V.~Ostrik:
$p$-adic dimensions in symmetric tensor categories in characteristic $p$.
Quantum Topol. 9 (2018), no. 1, 119–140.
		
Lecture Notes in Mathematics, 900. Springer-Verlag, Berlin-New York, 1982, pp. 101-228.

	 

\bibitem[EG]{EG} P.~Etingof, S.~Gelaki: Finite symmetric tensor categories with the Chevalley property in characteristic 2. J. Algebra Appl. 20 (2021), no. 1, Paper No. 2140010.

\bibitem[EGNO]{EGNO}P.~Etingof, S.~Gelaki, D.~Nikshych, V.~Ostrik:
Tensor categories. 
Mathematical Surveys and Monographs, 205. American Mathematical Society, Providence, RI, 2015. 



\bibitem[Ha]{Ha} W.J.~Haboush:
Reductive groups are geometrically reductive.
Ann. of Math. (2) 102 (1975), no. 1, 67--83.


\bibitem[Ja]{Jantzen}
J.C.~Jantzen:
Representations of algebraic groups. 
 Mathematical Surveys and Monographs, 107. American Mathematical Society, Providence, RI, 2003.






\bibitem[Mu]{Mu}
D.~Mumford:
Geometric invariant theory.
Ergebnisse der Mathematik und ihrer Grenzgebiete, (N.F.), Band 34
Springer-Verlag, Berlin-New York, 1965.


\bibitem[Na]{Na} M.~Nagata:
Lectures on the fourteenth problem of Hilbert.
Tata Institute of Fundamental Research, Bombay, 1965.


\bibitem[Ne]{Ne}
J.~Newton: Higher Verlinde categories of reductive groups. arXiv:2601.11084









\bibitem[Ve]{Ve} S.~Venkatesh:
Hilbert basis theorem and finite generation of invariants in symmetric tensor categories in positive characteristic.
Int. Math. Res. Not. IMRN 2016, no. 16, 5106--5133.
 
\bibitem[Wa]{Wa} W.C.~Waterhouse:
Geometrically reductive affine group schemes.
Arch. Math. (Basel) 62 (1994), no. 4, 306--307. 

 	\end{thebibliography}
\end{document}